\newtheorem{theorem}{Theorem}[section]
\newtheorem{example}[theorem]{Example}
\newtheorem{lemma}[theorem]{Lemma}
\newtheorem{corollary}[theorem]{Corollary}
\newtheorem{definition}[theorem]{Definition}
\newenvironment{proof}[1][Proof]{\noindent\textbf{#1.} }{\ \rule{0.5em}{0.5em}}
\begin{document}
	\begin{center}
		\textbf{\large{$\mathcal{I}^\mathcal{K}$-limit points, $\mathcal{I}^\mathcal{K}$-cluster points and $\mathcal{I}^\mathcal{K}$-Fr\'{e}chet compactness }}
\end{center}

	\begin{center}
	Manoranjan Singha$^*$, Sima Roy$^{**}$
	\end{center}

	\footnotetext{	\textbf{MSC:} Primary 54D30; Secondary 54A05
		
			\textbf{Key words:} Ideal, $\mathcal{I}$-nonthin,  $\mathcal{I}$-Fr\'{e}chet compactness, $\mathcal{I}^\mathcal{K}$-Fr\'{e}chet compactness

		$^*$Department of Mathematics, University of North Bengal, Darjeeling-734013, India, E-mail: manoranjan.math@nbu.ac.in
	
	$^{**}$Department of Mathematics, Raja Rammohun Roy Mahavidyalaya, Hooghly-712406, India, E-mail: rs$\_$sima@nbu.ac.in
	}

		\textbf{Abstract:} In 2011, the theory of $\mathcal{I}^\mathcal{K}$-convergence gets birth as an extension of the concept of $\mathcal{I}^*$-convergence of sequences of real numbers. $\mathcal{I}^\mathcal{K}$-limit points and $\mathcal{I}^\mathcal{K}$-cluster points of functions are introduced and studied to some extent, where $\mathcal{I}$ and $\mathcal{K}$ are ideals on a non-empty set $S$. In a first countable space set of $\mathcal{I}^\mathcal{K}$-cluster points is coincide with the closure of all sets in the filter base $\mathcal{B}_f(\mathcal{I}^\mathcal{K})$ for some function $f:S\to X$. Fr\'{e}chet compactness is studied in light of ideals $\mathcal{I}$ and $\mathcal{K}$ of subsets of $S$ and showed that in $\mathcal{I}$-sequential $T_2$ space Fr\'{e}chet compactness and $\mathcal{I}$-Fr\'{e}chet compactness are equivalent. A class of ideals have been identified for which $\mathcal{I}^\mathcal{K}$-Fr\'{e}chet compactness coincides with $\mathcal{I}$-Fr\'{e}chet compactness in first countable spaces.

\section{Reading the past}
	
	Theory of ideal convergence of real numbers was introduced by  P. Kostyrko, T. \v{S}al\'{a}t, W. Wilczy\'{n}ski \cite{Kostyrkoa}, which is an extension of statistical convergence by H. Fast \cite{Fast}( also Schoenberg \cite{Schoenberg}).  On this convergence many results have been done by different mathematicians like  B. K. Lahiri \cite{Lahiria}, J. Cincura \cite{Cincura}, T. \v{S}al\'{a}t \cite{Salatb}, S. Pehlivan \cite{Pehlivan}, M. Balcerzak \cite{Balcerzak}, K. Demirci \cite{Demirci} etc. Also the authors of \cite{Kostyrkoa} introduced $\mathcal{I}^*$-convergence, influenced by a result of T. Sal\'at \cite{T.Salat} and J. A. Fridy \cite{Fridy} about statistical convergent sequences (a sequence of real numbers is statistically convergent to $x$ if and only if there exists a set $M\subset \mathbb{N}$ with $\delta{(M)}=1$, asymptotic density or natural density of M \cite{Niven} such that the corresponding subsequence converges to $x$), and showed that  $\mathcal{I}^*$-convergence always implies $\mathcal{I}$-convergence but $\mathcal{I}$-convergence does not imply $\mathcal{I}^*$-convergence. As a result, a class of ideals (property AP) is defined in \cite{Kostyrkoa} and it is seen that the notions $\mathcal{I}$-convergence and $\mathcal{I}^*$-convergence are equivalent if and only if the ideal $\mathcal{I}$-satisfies the property AP. 
	
	An ideal $\mathcal{I}$ on an arbitrary set $S$ is a family $\mathcal{I}\subset 2^S$ that is closed under finite unions and taking subsets \cite{Kuratowski}.  An ideal $\mathcal{I}$ is called trivial if $\mathcal{I}= \{\emptyset\}$ or $S$ in $\mathcal{I}.$ A non-trivial ideal $\mathcal{I}\subset 2^S$ is called admissible if it contains all the singleton sets \cite{Kuratowski}. The ideal Fin is the class of all finite subsets of $\mathbb{N}$. Various examples of non-trivial admissible ideals are given in \cite{Kostyrkoa}. The notion of $\mathcal{I}$-limit point and $\mathcal{I}$-cluster point in a metric space $(X,d)$ were defined by P. Kostyrko, T. \v{S}al\'{a}t, W. Wilczy\'{n}ski \cite{Kostyrkoa}. B. K. Lahiri, P. Das \cite{Lahirib} generalized these concepts in  topological spaces and characterized $\mathcal{I}(C_x)$, the collection of all $\mathcal{I}$-cluster points of a sequence $x=(x_n)$ in a topological space $X$, as closed subsets of $X$ (see Theorem 10, \cite{Lahirib}). Also for any ideal $\mathcal{I}$ on $\mathbb{N}$,  $\mathcal{I}(L_x)$ the collection of all $\mathcal{I}$-limit points of $x$ is a subset of $\mathcal{I}(C_x)$ (see Theorem 9, \cite{Lahirib}). 
	
	In 2011, the $\mathcal{I}^\mathcal{K}$-convergence of function in a topological space was introduced by Ma\v{c}aj and Sleziak \cite{Macaj} as a generalization of $\mathcal{I}^*$-convergence of function.
	
	Let $S$ be a non-empty set and let $\mathcal{I}$ be an ideal on $S$. A function $f$ from $S$ to a topological space $X$ is said to be $\mathcal{I}^*$-convergent to some point $a\in X$ if there exists a set $K\in\mathcal{F}(\mathcal{I})$ such that the function $g:S\to X$ defined by
	$$g(s)=\left\{
	\begin{array}{ll}
		f(s) & \hbox{,~if~$s\in K$} \\
		a & \hbox{,~otherwise}
	\end{array}
	\right.$$ 
	is Fin-convergent to $a$. Whenever $S=\mathbb{N}$, $\mathcal{I}^*$-convergence of functions coincide with $\mathcal{I}^*$-convergence of sequences as a special case. $\mathcal{I}^\mathcal{K}$-convergence is defined by replacing Fin by an arbitrary ideal $\mathcal{K}$ on $S$. If $\mathcal{I}$ is a nontrivial admissible ideal on S, $\mathcal{I}/_M =\{A \cap M;A \in \mathcal{I}\}$
	is an ideal on M called trace of $\mathcal{I}$ on $M$ \cite{Macaj}. $\mathcal{I}/_M$ is nontrivial if $M\notin \mathcal{I}$.

\section{$\mathcal{I}^\mathcal{K}$-Limit Points and $\mathcal{I}^\mathcal{K}$-Cluster points}
Let's recall the notions of $\mathcal{I}^\mathcal{K}$-limit point and  $\mathcal{I}^\mathcal{K}$-cluster point for a function in a topological space $X$ and  results from \cite{Sharmah}.
\begin{definition}
	Let $f: S\rightarrow X$ be a function and $\mathcal{I}$, $\mathcal{K}$ be two ideals on $S$.
	\begin{enumerate}
		\item  $x\in X$ is called an $\mathcal{I}^\mathcal{K}$-limit point of $f$ if there exists a set $M\in \mathcal{F}(\mathcal{I})$ such that for the function $g:S\rightarrow X$ defined by $g/_M= f/_M$ and $g[S\setminus M]=\{x\}$ has a $\mathcal{K}$-limit point $x$. 
		\item  $x\in X$ is called an $\mathcal{I}^\mathcal{K}$-cluster point of $f$ if there exists a set $M\in \mathcal{F}(\mathcal{I})$ such that for the function $g:S\rightarrow X$ defined by $g/_M= f/_M$ and $g[S\setminus M]=\{x\}$ has a $\mathcal{K}$-cluster point $x$, i.e., for any open set $U$ containing $x$,  $\{s\in S: g(s)\in U\}\notin \mathcal{K}$.
	\end{enumerate}
\end{definition}

The collection of all $\mathcal{I}^\mathcal{K}$-limit points and $\mathcal{I}^\mathcal{K}$-cluster points of a function $f$ in a topological space $X$ is denoted by $L_f(\mathcal{I}^\mathcal{K})$ and $C_f(\mathcal{I}^\mathcal{K})$ respectively. For admissible ideals $\mathcal{I}$ and $\mathcal{K}$, $L_f(\mathcal{I}^\mathcal{K})\subset C_f(\mathcal{I}^\mathcal{K})$ \cite{Sharmah}.

\begin{theorem} $(i)$ $C_f(\mathcal{I}^\mathcal{K})$ is closed for any function $f$ from S to a topological space $X$ and any ideals $\mathcal{I}$, $\mathcal{K}$ on S.\\
	$(ii)$ Suppose $X$ is completely separable and $\mathcal{I}$, $\mathcal{K}$ be ideals on $S$. If there exists a pairwise disjoint sequence of sets $\{T_p\}$ such that $T_p\subset S$, $T_p\notin
	\mathcal{K}$ for all $p\in \omega$ and $\mathcal{I}\subset\mathcal{K}$ then for any non-empty closed set $C\subset X$, there is a function $f: S\to X$ such that $C=C_f(\mathcal{I}^\mathcal{K})$.

\end{theorem}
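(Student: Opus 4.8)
The plan is to begin with a computation that simplifies the preimages appearing in the definition. For $x \in X$, $M \in \mathcal{F}(\mathcal{I})$, and the associated $g$ (with $g/_M = f/_M$ and $g[S\setminus M]=\{x\}$), every open $W \ni x$ satisfies $g^{-1}(W) = (f^{-1}(W)\cap M)\cup(S\setminus M)$, since each $s \notin M$ is mapped to $x \in W$. Writing $A_W := f^{-1}(W)$, the $\mathcal{K}$-cluster condition becomes $(A_W\cap M)\cup(S\setminus M)\notin\mathcal{K}$, and this identity governs both parts.

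For $(i)$ I would split on whether $\mathcal{I}\subseteq\mathcal{K}$. If $\mathcal{I}\not\subseteq\mathcal{K}$, choose $E\in\mathcal{I}\setminus\mathcal{K}$ and $M = S\setminus E\in\mathcal{F}(\mathcal{I})$; then $(A_W\cap M)\cup E\supseteq E\notin\mathcal{K}$ for every $x$ and every open $W\ni x$, so $C_f(\mathcal{I}^\mathcal{K})=X$, which is closed. If $\mathcal{I}\subseteq\mathcal{K}$, then $S\setminus M\in\mathcal{I}\subseteq\mathcal{K}$, so the union lies in $\mathcal{K}$ exactly when $A_W\cap M\in\mathcal{K}$; as $A_W\cap M\subseteq A_W$, the choice $M=S$ is the most permissive and $x\in C_f(\mathcal{I}^\mathcal{K})$ iff $A_W\notin\mathcal{K}$ for every open $W\ni x$. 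To see the complement is open, take $x_0\notin C_f(\mathcal{I}^\mathcal{K})$; applying the definition with $M=S$ yields an open $U_0\ni x_0$ with $A_{U_0}\in\mathcal{K}$. For any $y\in U_0$ and any candidate $M$, testing $W=U_0$ gives $(A_{U_0}\cap M)\cup(S\setminus M)\in\mathcal{K}$ (both pieces lie in $\mathcal{K}$), so $y\notin C_f(\mathcal{I}^\mathcal{K})$; hence $U_0$ misses $C_f(\mathcal{I}^\mathcal{K})$.

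For $(ii)$ I would use that $\mathcal{I}\subseteq\mathcal{K}$ renders each tail $S\setminus M$ $\mathcal{K}$-negligible, so the $\mathcal{I}^\mathcal{K}$-cluster set of any $f$ coincides with its set of $\mathcal{K}$-cluster points; it thus suffices to realize $C$ as a $\mathcal{K}$-cluster set. Since $X$ is completely separable (second countable), the closed subspace $C$ is separable; fix a sequence $(d_p)_{p\in\omega}$ whose range is dense in $C$ (repeating entries if $C$ is finite). Put $f\equiv d_p$ on $T_p$ (well defined by pairwise disjointness) and $f\equiv d_0$ off $\bigcup_p T_p$, so that $f(S)\subseteq C$. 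For $x\in C$ and open $W\ni x$, density gives some $d_p\in W$, so with $M=S$ one has $f^{-1}(W)\supseteq T_p\notin\mathcal{K}$; as $W$ is arbitrary, $x\in C_f(\mathcal{I}^\mathcal{K})$. For $x\notin C$, closedness furnishes open $W\ni x$ with $W\cap C=\emptyset$, whence $f^{-1}(W)=\emptyset$ and $(f^{-1}(W)\cap M)\cup(S\setminus M)=S\setminus M\in\mathcal{K}$ for every $M\in\mathcal{F}(\mathcal{I})$, so no $M$ witnesses $x$. Together these give $C_f(\mathcal{I}^\mathcal{K})=C$.

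The main obstacle is conceptual rather than computational: recognizing that under $\mathcal{I}\subseteq\mathcal{K}$ the off-$M$ tail $S\setminus M$ is always $\mathcal{K}$-negligible, which both forces the degenerate collapse $C_f(\mathcal{I}^\mathcal{K})=X$ when $\mathcal{I}\not\subseteq\mathcal{K}$ and, in the complementary case, prevents smaller choices of $M$ from manufacturing spurious cluster points in $(ii)$. Once this is isolated, complete separability and the $\mathcal{K}$-positive pairwise-disjoint blocks $\{T_p\}$ serve only to index a dense sequence in $C$ and to make each relatively open slice of $C$ the image of a set lying outside $\mathcal{K}$.
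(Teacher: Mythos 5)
Your proof is correct. For part (ii) you take essentially the paper's route: the same construction (a point of a countable dense subset of $C$ assigned to each $\mathcal{K}$-positive block $T_p$, a fixed point of $C$ elsewhere) and the same two inclusions; your treatment of $x\notin C$ is just the contrapositive of the paper's argument that every neighbourhood of an $\mathcal{I}^\mathcal{K}$-cluster point must meet $C$, and both rely on $\mathcal{I}\subseteq\mathcal{K}$ in the same way. Part (i) is where you genuinely diverge: the paper omits that proof altogether (deferring to Theorem 4.6(i) of Sharmah--Hazarika), while you give a complete, self-contained argument organized around a dichotomy that is more informative than a direct closure proof. If $\mathcal{I}\not\subseteq\mathcal{K}$, planting a set $E\in\mathcal{I}\setminus\mathcal{K}$ off $M=S\setminus E$ makes every point of $X$ an $\mathcal{I}^\mathcal{K}$-cluster point of every $f$, so the cluster set is trivially closed; if $\mathcal{I}\subseteq\mathcal{K}$, every tail $S\setminus M$ is $\mathcal{K}$-small, so $C_f(\mathcal{I}^\mathcal{K})$ collapses to the set of ordinary $\mathcal{K}$-cluster points of $f$, whose complement you correctly show is open. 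This reduction also streamlines (ii), since the existential quantifier over $M$ disappears, and it has diagnostic value for the paper itself: your first case shows that an inclusion such as $C_f(\mathcal{I}^\mathcal{K})\subseteq C_f(Fin)$ (Theorem 2.3(i), read with ordinary cluster points) cannot hold for arbitrary ideals, so hypotheses relating $\mathcal{I}$ and $\mathcal{K}$ are genuinely needed in such statements.

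One pedantic caveat: you repeatedly use $M=S$ as the witness, but the definition demands $g[S\setminus M]=\{x\}$, which fails literally when $S\setminus M=\emptyset$. This is harmless, because under $\mathcal{I}\subseteq\mathcal{K}$ any $M\in\mathcal{F}(\mathcal{I})$ serves in its place: $f^{-1}(W)\setminus M\subseteq S\setminus M\in\mathcal{K}$ shows $f^{-1}(W)\cap M\notin\mathcal{K}$ whenever $f^{-1}(W)\notin\mathcal{K}$, and $g^{-1}(W)\supseteq f^{-1}(W)$ whenever the off-$M$ constant lies in $W$; adding this remark would make your characterization watertight against the stricter reading of the definition (which the paper itself also glosses over).
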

\begin{proof} The proof of $(i)$ is omitted as being similar to the proof of Theorem $4.6(i)$ in \cite{Sharmah}. For $(ii)$, since $X$ is completely separable, $C$ is separable and so let $A=\{c_1,c_2,...\}\subset C$ be a countable set with $\bar{A}=C$. Consider a function $f: S\to X$ such that $$f(s)=\left\{
	\begin{array}{ll}
		c_i & \hbox{,~if~$s\in T_i;$} \\
		b & \hbox{,~otherwise}
	\end{array}
	\right.$$
	b is an element of $C$. Let $p\in C_f(\mathcal{I}^\mathcal{K})$ (taking $p\neq b$ and $c_i$ for any i, otherwise if $p=b$ or $c_i$ for some i, then $p\in C$). Let $U$ be any open set containing $p$. Then there exists a set $M\in\mathcal{F}(\mathcal{I})$ such that function $g: S\to X$ given by $g/_M= f/_M$ and $g[S\setminus M]=\{p\}$, has $\mathcal{K}$-cluster point $p$. Therefore $\{s\in S: g(s)\in U\}\notin \mathcal{K}$. Since $\mathcal{I}\subset\mathcal{K}$, $\{s\in S: g(s)\in U\}\notin \mathcal{I}$ and so either $b\in U$ or $c_i\in U$, for some i. Therefore $U\cap C\neq \phi$. Thus $p$ is a limit point of $C$ and so $p\in C$. Hence $C_f(\mathcal{I}^\mathcal{K})\subset C$. 
	
	Conversely, let $p\in C$ and let $U$ be any open set containing $p$. Then there is $c_i\in A$ such that $c_i\in U$. Thus $T_i\subset \{s\in S: f(s)\in U\}$. Therefore there exists a set $M\in\mathcal{F}(\mathcal{I})$ such that function $g: S\to X$ given by $g/_M= f/_M$ and $g[S\setminus M]=\{p\}$ and $\{s\in S: g(s)\in U\}\supset \{s\in S: f(s)\in U\}$. Since $\{s\in S: f(s)\in U\}\notin \mathcal{K}$,  $\{s\in S: g(s)\in U\}\notin \mathcal{K}$. This implies that $p\in C_f(\mathcal{I}^\mathcal{K})$.
\end{proof}

\begin{theorem} \label{fin} Let $f: S\rightarrow X$ and $g: S\rightarrow X$ be two functions and $\mathcal{I}$, $\mathcal{K}$, $\mathcal{K}_1$,$\mathcal{K}_2$ be ideals on $S$. Then\\
	$(i)$ $C_f(\mathcal{I}^\mathcal{K}) \subset C_f(Fin)$ and  $L_f(\mathcal{I}^\mathcal{K}) \subset L_f(Fin)$.\\
	$(ii)$ $C_f(\mathcal{I}^{\mathcal{K}_2}) \subset C_f(\mathcal{I}^{\mathcal{K}_1})$ and $L_f(\mathcal{I}^{\mathcal{K}_2}) \subset L_f(\mathcal{I}^{\mathcal{K}_1})$ with $\mathcal{K}_1\subset \mathcal{K}_2$.\\
		$(iii)$ If $\{s\in S : f(s)\neq g(s)\}\in\mathcal{K}$, then $C_f(\mathcal{I}^\mathcal{K})=$ $C_g(\mathcal{I}^\mathcal{K})$ and   $L_f(\mathcal{I}^\mathcal{K})=$ $L_g(\mathcal{I}^\mathcal{K})$.
	
\end{theorem}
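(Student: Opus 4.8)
The plan is to handle all three parts with one device: fix the witnessing set $M\in\mathcal{F}(\mathcal{I})$ supplied by the definition of an $\mathcal{I}^\mathcal{K}$-cluster (or limit) point, keep the associated auxiliary function $g$ (with $g/_M=f/_M$ and $g[S\setminus M]=\{x\}$), and reduce each desired inclusion to a membership computation among the sets $\{s\in S:(\cdot)(s)\in U\}$ using only the ideal axioms (closure under subsets and finite unions) together with admissibility, so that $Fin$ sits inside every ideal in sight. Throughout, the identity doing the work is
\[
\{s\in S:g(s)\in U\}=\{s\in M:f(s)\in U\}\cup(S\setminus M),
\]
valid for every open $U\ni x$, in which $S\setminus M\in\mathcal{I}$.

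Parts $(ii)$ and $(iii)$ are the soft ones and I would dispatch them first. For $(ii)$, given $x\in C_f(\mathcal{I}^{\mathcal{K}_2})$ with witness $M$, I claim the very same $M$ and $g$ witness $x\in C_f(\mathcal{I}^{\mathcal{K}_1})$: for each open $U\ni x$ we have $\{s\in S:g(s)\in U\}\notin\mathcal{K}_2$, and since $\mathcal{K}_1\subset\mathcal{K}_2$ this set is a fortiori not in $\mathcal{K}_1$. The limit-point version is identical, transporting the inner witness $N\notin\mathcal{K}_2$ (on which $g/_N$ is $Fin$-convergent to $x$) to $N\notin\mathcal{K}_1$. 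For $(iii)$, put $D=\{s\in S:f(s)\neq g(s)\}\in\mathcal{K}$; reusing one witness $M$ for both functions, the two modified functions differ only on $D\cap M\subset D\in\mathcal{K}$. Since deleting a $\mathcal{K}$-set from, or adjoining one to, a set that is not in $\mathcal{K}$ leaves it outside $\mathcal{K}$, the cluster/limit condition transfers from one function to the other, and symmetry gives equality.

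The crux is $(i)$, where one must pass from a statement about the auxiliary function $g$ and the ideal $\mathcal{K}$ to a statement about $f$ itself and the coarsest ideal $Fin$. Starting from $\{s\in S:g(s)\in U\}\notin\mathcal{K}$ and the identity above, the goal is to show $\{s\in M:f(s)\in U\}$ is infinite, so that the larger set $\{s\in S:f(s)\in U\}$ is infinite, i.e. not in $Fin$, giving $x\in C_f(Fin)$. The genuine obstacle is that the bulk term $S\setminus M$ might by itself be responsible for $\{s\in S:g(s)\in U\}\notin\mathcal{K}$ while contributing nothing to $f$; this is exactly what must be excluded. The clean way to exclude it is to absorb $S\setminus M$ into $\mathcal{K}$: since $S\setminus M\in\mathcal{I}$, if $\mathcal{I}\subset\mathcal{K}$ then $\{s\in M:f(s)\in U\}\in\mathcal{K}$ would force the whole union into $\mathcal{K}$, a contradiction, so $\{s\in M:f(s)\in U\}\notin\mathcal{K}$, and admissibility upgrades this to ``infinite''.

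The limit-point half of $(i)$ runs the same way: intersect the inner witness $N$ with $M$, use $N\setminus M\subset S\setminus M\in\mathcal{I}\subset\mathcal{K}$ to keep $N\cap M\notin\mathcal{K}$, and observe that $g=f$ there, so $f/_{N\cap M}$ is $Fin$-convergent to $x$ on an infinite set. I expect this absorption step to be the main difficulty, and I would flag explicitly that it is where admissibility (so that $Fin\subset\mathcal{K}$) together with the relation between $\mathcal{I}$ and $\mathcal{K}$ is indispensable; equivalently, $(i)$ factors as $C_f(\mathcal{I}^\mathcal{K})\subset C_f(\mathcal{K})\subset C_f(Fin)$, the first inclusion resting on the absorption of $S\setminus M$ and the second being mere monotonicity of single-ideal cluster sets under $Fin\subset\mathcal{K}$.
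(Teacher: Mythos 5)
Your treatment of parts (ii) and (iii) is correct and coincides with what the paper does: for (iii) the paper uses exactly your device --- a single witness $M\in\mathcal{F}(\mathcal{I})$ serving both functions, the observation that the two modified functions $h_1,h_2$ differ only inside $\{s\in S: f(s)\neq g(s)\}\in\mathcal{K}$, and the inclusion $\{s\in S: h_1(s)\in U\}\subset\{s\in S: h_2(s)\in U\}\cup\{s\in S: h_1(s)\neq h_2(s)\}$ followed by the subset/finite-union axioms --- while (ii) the paper simply records as immediate from the definitions, which is the monotonicity argument you give.

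Part (i) is where you part company with the paper, and the discrepancy matters. The paper asserts that (i), too, ``follows from the definitions''; your analysis shows it does not, and you should be explicit that what you prove is not the statement as printed: your absorption step uses $\mathcal{I}\subset\mathcal{K}$ (to put $S\setminus M$ into $\mathcal{K}$) and admissibility of $\mathcal{K}$ (to get $Fin\subset\mathcal{K}$), neither of which is among the hypotheses of the theorem. This is not a removable blemish in your argument, because the extra hypotheses are genuinely necessary: (i) is false in the stated generality. Take $S=\mathbb{N}$, $X=\mathbb{R}$, $E$ the set of even numbers, $\mathcal{I}=\{A\subset\mathbb{N}: A\cap E \text{ is finite}\}$ (an admissible ideal), $\mathcal{K}=Fin$, and $f(n)=n$. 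With $M=E\in\mathcal{F}(\mathcal{I})$, the modified function $g$ is identically $0$ on the infinite set of odd numbers, so $0$ is both a $Fin$-cluster point and a $Fin$-limit point of $g$; hence $0\in C_f(\mathcal{I}^{Fin})$ and $0\in L_f(\mathcal{I}^{Fin})$, whereas $C_f(Fin)=L_f(Fin)=\emptyset$ because $f(n)\to\infty$. Note that here $\mathcal{I}\not\subset\mathcal{K}$, which is exactly the configuration your absorption step excludes: the bulk term $S\setminus M$ you flagged is a real obstruction, not a technicality. So your conditional version of (i), with the factorization $C_f(\mathcal{I}^\mathcal{K})\subset C_f(\mathcal{K})\subset C_f(Fin)$, is the correct repair; the gap lies in the paper's statement (and propagates to its later appeal to (i) in the proof of Theorem \ref{topol}, where again no relation between $\mathcal{I}$ and $\mathcal{K}$ is assumed), not in your reasoning.
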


\begin{proof} $(i)$ and $(ii)$ follow from the definitions.	For $(iii)$, 
let $p\in C_f(\mathcal{I}^\mathcal{K})$. Then there exists a set $M\in \mathcal{F}(\mathcal{I})$ such that the function $h_1: S\to X$ given by $h_1(s)=f(s)$ if $s\in M$ and $p$ otherwise, has $\mathcal{K}$-cluster point $p$. So for any open set $U$ containing $p$, $\{s\in S : h_1(s)\in U\}\notin \mathcal{K}$. Consider a function $h_2: S\to X$  given by $h_2(s)=g(s)$, if $s\in M$ and $p$ otherwise. But if $\{s\in S : h_2(s)\in U\}\in \mathcal{K}$, then $\{s\in S : h_1(s)\in U\}\subset \{s\in S : h_2(s)\in U\}\cup \{s\in S : h_1(s)\neq h_2(s)\}$ so by given condition and our assumption  $\{s\in S : h_1(s)\in U\}\in\mathcal{K}$, which is a contradiction. Therefore $\{s\in S : h_2(s)\in U\}\notin \mathcal{K}$ and so $p \in C_g(\mathcal{I}^\mathcal{K})$. Similarly, $C_g(\mathcal{I}^\mathcal{K})\subset C_f(\mathcal{I}^\mathcal{K})$ and consequently $C_f(\mathcal{I}^\mathcal{K})=$ $C_g(\mathcal{I}^\mathcal{K})$.
The proof of $\mathcal{I}^\mathcal{K}$-limit points is same as previous one.
\end{proof}

If $(X, \tau)$ is a topological space , then
$\{X\setminus C\subset X ; C =  {\underset {f\in C^A} \bigcup} C_f(\mathcal{I}^\mathcal{K})\}$ forms a topology on $X$, denoted by $\tau(\mathcal{I}^\mathcal{K})$, cf. \cite{Leonetti}.

\begin{theorem}\label{topol}
	For any topological space $(X, \tau)$, $\tau\subset \tau(\mathcal{I}^\mathcal{K})$. In addition, if $X$ is first countable, $\tau = \tau(\mathcal{I}^\mathcal{K})$.
	
\end{theorem}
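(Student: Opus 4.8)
The plan is to argue at the level of closure operators. Reading the displayed definition with $C^{A}$ as the functions $f\colon S\to C$, the $\tau(\mathcal{I}^{\mathcal{K}})$-closed sets are precisely the fixed points of the operator $\mathrm{cl}_{\mathcal{I}^{\mathcal{K}}}(A)=\bigcup_{f\colon S\to A}C_f(\mathcal{I}^{\mathcal{K}})$. Since $A\subseteq\mathrm{cl}_{\mathcal{I}^{\mathcal{K}}}(A)$ always holds (each constant map $f\equiv a$ with $a\in A$ satisfies $a\in C_f(\mathcal{I}^{\mathcal{K}})$), the whole statement reduces to comparing $\mathrm{cl}_{\mathcal{I}^{\mathcal{K}}}$ with the ordinary closure $\overline{(\cdot)}^{\tau}$. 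I would prove $\mathrm{cl}_{\mathcal{I}^{\mathcal{K}}}(A)\subseteq\overline{A}^{\tau}$ for every $A$, which forces every $\tau$-closed set to be a fixed point and hence gives $\tau\subseteq\tau(\mathcal{I}^{\mathcal{K}})$; and, assuming first countability, the reverse inclusion $\overline{A}^{\tau}\subseteq\mathrm{cl}_{\mathcal{I}^{\mathcal{K}}}(A)$, which upgrades this to $\tau=\tau(\mathcal{I}^{\mathcal{K}})$. Throughout I use $\mathcal{I}\subseteq\mathcal{K}$: without it, a witness $M$ with $S\setminus M\in\mathcal{I}\setminus\mathcal{K}$ makes the counting set $\mathcal{K}$-positive for every $U$, forcing $C_f(\mathcal{I}^{\mathcal{K}})=X$ for all $f$ and collapsing $\tau(\mathcal{I}^{\mathcal{K}})$ to the indiscrete topology.

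For the first inclusion, fix $f\colon S\to A$ and $y\in C_f(\mathcal{I}^{\mathcal{K}})$ with witness $M\in\mathcal{F}(\mathcal{I})$, and let $U$ be open with $y\in U$. Unwinding the definition, the modified map $g$ gives $\{s:g(s)\in U\}=(S\setminus M)\cup\bigl(M\cap f^{-1}(U)\bigr)\notin\mathcal{K}$. As $S\setminus M\in\mathcal{I}\subseteq\mathcal{K}$, the $\mathcal{K}$-positivity must come from the second piece, so $M\cap f^{-1}(U)\notin\mathcal{K}$ and is in particular non-empty; hence $U$ meets $f[M]\subseteq A$, and $y\in\overline{A}^{\tau}$. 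Thus every $\tau$-closed $C$ satisfies $\mathrm{cl}_{\mathcal{I}^{\mathcal{K}}}(C)\subseteq\overline{C}^{\tau}=C$, making it $\tau(\mathcal{I}^{\mathcal{K}})$-closed; note this direction uses no countability.

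For the converse under first countability, take $y\in\overline{A}^{\tau}$, a decreasing neighbourhood base $U_1\supseteq U_2\supseteq\cdots$ at $y$, and points $a_n\in A\cap U_n$, so that $a_n\to y$. The task is to build one function $f\colon S\to A$ exhibiting $y$ as an $\mathcal{I}^{\mathcal{K}}$-cluster point; taking witness $M=S$ it suffices that $f^{-1}(U)\notin\mathcal{K}$ for every neighbourhood $U$ of $y$. When $S=\omega$ this is immediate: set $f(n)=a_n$, so that $f^{-1}(U)\supseteq\{n:a_n\in U\}$ is cofinite and hence $\notin\mathcal{K}$ for the nontrivial admissible ideal $\mathcal{K}$; therefore $y\in C_f(\mathcal{I}^{\mathcal{K}})\subseteq\mathrm{cl}_{\mathcal{I}^{\mathcal{K}}}(A)$. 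With the first inclusion this yields $\mathrm{cl}_{\mathcal{I}^{\mathcal{K}}}=\overline{(\cdot)}^{\tau}$ and so $\tau=\tau(\mathcal{I}^{\mathcal{K}})$.

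The main obstacle is exactly this last construction for a general index set $S$: one needs a map $\psi\colon S\to\omega$ all of whose tails $\psi^{-1}([N,\infty))$ are $\mathcal{K}$-positive, after which $f(s)=a_{\psi(s)}$ works because $f^{-1}(U)\supseteq\psi^{-1}([N,\infty))$ whenever $U\supseteq U_N$. Such a $\psi$ exists whenever $S$ is a countable union of $\mathcal{K}$-sets (disjointify the cover and take complements of the finite sub-unions), and in particular always for $S=\omega$ with $\mathcal{K}$ admissible—the setting in which these first-countability results naturally live. For pathological $S$ and $\mathcal{K}$ it can genuinely fail (for instance when the dual filter of $\mathcal{K}$ is countably complete), so I would either restrict to $S=\omega$ or record ``$S$ is a countable union of $\mathcal{K}$-sets'' as the hypothesis under which the converse inclusion, and hence the equality $\tau=\tau(\mathcal{I}^{\mathcal{K}})$, is valid.
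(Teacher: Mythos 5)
Your proof is correct under the hypotheses you add, and those hypotheses are genuinely needed, so the useful comparison is with how the paper handles the same two steps. For $\tau\subseteq\tau(\mathcal{I}^{\mathcal{K}})$ you and the paper argue identically in outline: constant maps give $G\subseteq\bigcup_f C_f(\mathcal{I}^{\mathcal{K}})$, and one contracts back into $\overline{G}=G$ because cluster points of functions into $G$ lie in $\overline{G}$. The difference is that the paper outsources the contraction to its Theorem \ref{fin}(i), $C_f(\mathcal{I}^{\mathcal{K}})\subseteq C_f(Fin)$, stated there for arbitrary ideals, whereas you prove it directly and observe that it fails without $\mathcal{I}\subseteq\mathcal{K}$; your collapse argument (a witness $M\in\mathcal{F}(\mathcal{I})$ with $S\setminus M\notin\mathcal{K}$ makes $C_f(\mathcal{I}^{\mathcal{K}})=X$ for every $f$) is a genuine counterexample both to the unconditioned Theorem \ref{fin}(i) and to the unconditioned inclusion $\tau\subseteq\tau(\mathcal{I}^{\mathcal{K}})$, so making that hypothesis explicit is a correction, not pedantry. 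The real divergence is in the first-countable half, where your route is the logically required one and the paper's is not: the paper takes $\alpha$ already inside the $\tau(\mathcal{I}^{\mathcal{K}})$-closed set $G$, extracts distinct $s_i$ with $g(s_i)\in U_i$, and concludes that $\alpha$ is a limit point of $G$, ``hence $G$ is $\tau$-closed.'' That establishes $G\subseteq\overline{G}$ (that $G$ is dense in itself), not the needed $\overline{G}\subseteq G$; moreover the extracted $s_i$ may all lie in $S\setminus M$, where $g\equiv\alpha$. Closedness needs exactly your construction: for $y\in\overline{G}$, build $f\colon S\to G$ with $y\in C_f(\mathcal{I}^{\mathcal{K}})$, whence $y\in\mathrm{cl}_{\mathcal{I}^{\mathcal{K}}}(G)=G$; and your identification of the needed ingredient, a map $\psi\colon S\to\omega$ with all tails $\mathcal{K}$-positive (automatic for $S=\omega$ with $\mathcal{K}$ proper and admissible), is precisely what the paper never supplies (its index set $A$ is never even pinned down). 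Two minor points: with $M=S$ the definitional clause $g[S\setminus M]=\{y\}$ is vacuous, so that reduction is harmless; and your proposed obstruction ``dual filter of $\mathcal{K}$ countably complete'' is necessary but not sufficient for failure --- the ideal of countable subsets of $\omega_1$ has countably complete dual, yet partitioning $\omega_1$ into countably many uncountable pieces produces the required $\psi$. Genuine failure needs essentially a countably complete ultrafilter sitting above $\mathcal{K}$ (an atom of a $\sigma$-complete, finitely saturated ideal), hence a measurable cardinal; so in the absence of large cardinals your extra hypothesis for the converse inclusion can always be arranged.
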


\begin{proof}
	Let $G$ be $\tau$-closed set. For each element $\alpha \in G$, consider the constant function $f: A \to G$ at $\alpha$. Then $G\subset {\underset {f\in G^A} \bigcup} C_f(\mathcal{I}^\mathcal{K}) \subset  {\underset {f\in G^A} \bigcup} C_f(Fin) =G$ (as in Theorem \ref{fin} (i)).\\
	Consider a $\tau({\mathcal{I}^\mathcal{K}})$-closed set $G$ and $\alpha\in G$. There is a set $M\in \mathcal{F}(\mathcal{I}/_A)$ such that the function $g: A\to G$ given by 
	$$g(s)=\left\{
	\begin{array}{ll}
		f(s) & \hbox{,~if~$s\in M;$} \\
		\alpha & \hbox{,~if~$s\in A\setminus M$}
	\end{array}
	\right.$$
	has $\mathcal{K}$-cluster point $\alpha$. Let $\{U_p\}$ be a decreasing local base at $\alpha$. Then $E_i=\{s\in A: g(s)\in U_i\}\notin \mathcal{K}/_A$, for each $i\in\mathbb{N}$. Take $s_1\in E_1$ and for each $i\in\mathbb{N}$, $s_{i+1}\in$ $E_{i+1}\setminus \{s_1,s_2,...s_{i}\}$. Suppose $C=\{s_1,s_2,...\}$, then $\alpha$ is a limit point of $\{g(s): s\in C\}$ that is, $\alpha$ is a limit point of $G$. Hence $G$ is $\tau$-closed set.
\end{proof}

\begin{lemma}\label{Unifcont}
	Let $A$ be compact subset of a topological space $X$ and let  $f: S \to X$ be a function. Then $\{s\in S : f(s) \in A\}\notin \mathcal{K}$ implies that $A\cap C_f(\mathcal{I}^\mathcal{K})\neq \phi$.
\end{lemma}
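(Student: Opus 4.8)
The plan is to argue by contradiction, using the compactness of $A$ together with the observation that $S$ itself is always an admissible witness in the definition of an $\mathcal{I}^\mathcal{K}$-cluster point. The first thing I would note is that, since $\emptyset\in\mathcal{I}$, we have $S\in\mathcal{F}(\mathcal{I})$; hence the choice $M=S$ forces the associated function $g$ to coincide with $f$ (the clause $g[S\setminus M]=\{x\}$ being vacuous). Consequently every $\mathcal{K}$-cluster point of $f$ is automatically an $\mathcal{I}^\mathcal{K}$-cluster point of $f$, so it suffices to produce a single point of $A$ that is an ordinary $\mathcal{K}$-cluster point of $f$.

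Suppose, for contradiction, that $A\cap C_f(\mathcal{I}^\mathcal{K})=\phi$. Fix any $x\in A$. Since $x\notin C_f(\mathcal{I}^\mathcal{K})$, in particular the witness $M=S$ must fail, so $x$ is not a $\mathcal{K}$-cluster point of $f$; that is, there is an open set $U_x$ containing $x$ with $\{s\in S: f(s)\in U_x\}\in\mathcal{K}$. The family $\{U_x : x\in A\}$ is then an open cover of $A$, and by compactness I would extract a finite subcover $U_{x_1},\dots,U_{x_n}$ with $A\subseteq\bigcup_{i=1}^{n}U_{x_i}$.

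To finish, I would push this cover back through $f$: writing $B=\{s\in S: f(s)\in A\}$, we get $B\subseteq\bigcup_{i=1}^{n}\{s\in S: f(s)\in U_{x_i}\}$. Each set on the right lies in $\mathcal{K}$, and since $\mathcal{K}$ is closed under finite unions and under subsets, $B\in\mathcal{K}$, contradicting the hypothesis $B\notin\mathcal{K}$. Hence some point of $A$ is a $\mathcal{K}$-cluster point of $f$ and therefore belongs to $C_f(\mathcal{I}^\mathcal{K})$, giving $A\cap C_f(\mathcal{I}^\mathcal{K})\neq\phi$. The argument is short, and its only genuine ingredients are the finite subcover supplied by compactness and the ideal axioms for $\mathcal{K}$; I do not expect any serious obstacle. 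The one point worth stating carefully is the reduction via $M=S$, which is precisely what makes the ideal $\mathcal{I}$ play no essential role in this particular lemma.
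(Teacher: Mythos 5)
Your proof is correct and is essentially the paper's argument: contradiction, a pointwise choice of open sets $U_x$ whose $f$-preimages lie in $\mathcal{K}$, a finite subcover from compactness of $A$, and the closure of $\mathcal{K}$ under finite unions and subsets. The only cosmetic difference is that the paper does not specialize to the witness $M=S$: it works with an arbitrary $M\in\mathcal{F}(\mathcal{I})$ and its associated $g$, then uses the inclusion $\{s\in S: f(s)\in U_a\}\subseteq\{s\in S: g(s)\in U_a\}$ (valid because $g\equiv a\in U_a$ off $M$), which sidesteps the mild awkwardness, which you correctly note and resolve, of reading the clause $g[S\setminus M]=\{x\}$ as vacuous when $M=S$.
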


\begin{proof}
	If possible let, $A\cap C_f(\mathcal{I}^\mathcal{K})= \phi$. Let $a\in A$. Then there exists a set $M\in \mathcal{F}(\mathcal{I})$ such that the function $g: S \to X$ given by $g(s)=f(s)$ if $s\in M$ and $a$ otherwise, has no $\mathcal{K}$-cluster point. So for each $a\in A$ there exists an open set $U_a$ containing $a$ such that $\{s\in S : g(s)\in U_a\}\in\mathcal{K}$. Since $\{s\in S : f(s)\in U_a\}\subset \{s\in S : g(s)\in U_a\}$, so $L_a=\{s\in S : f(s)\in U_a\}\in\mathcal{K}$. Now consider the collection $\{U_a: a\in A\}$ which forms an open cover of the compact set $A$ and so it has a finite subcover $\{U_{a_1}, U_{a_2},...U_{a_l}\}$ (say). Then $\{s\in S: f(s)\in A\}\subset L_{a_1}\cup L_{a_2}\cup...\cup L_{a_l}\in\mathcal{K}$. This implies $\{s\in S: f(s)\in A\}\in\mathcal{K}$, which is a contradiction.
\end{proof}

For any topological space $X$, $f:S\to X$ be a function then $\mathcal{I}^\mathcal{K}$-filter generated by $f$ is defined by\\
 $\mathcal{G}_f(\mathcal{I}^\mathcal{K})=$ $\{Y\subset X:$ there is $A\in\mathcal{F}(\mathcal{I})$ such that $\{s\in S: g(s)\notin Y\} \in \mathcal{K} \}$,
 where $$g(s)=\left\{
\begin{array}{ll}
	f(s) & \hbox{,~if~$s\in A;$} \\
	y & \hbox{,~otherwise}
\end{array}
\right.$$, $y\in X$. Therefore  $\mathcal{G}_f(\mathcal{I}^\mathcal{K})$ forms a filter on $X$, cf. \rm(\cite{Bourbaki}, Definition 7, p.64). The corresponding filter base
$\mathcal{B}_f(\mathcal{I}^\mathcal{K})= \{ \{g(s): s\notin K\}: K\in\mathcal{K}$ and there is $A\in \mathcal {F}(\mathcal{I})$ with $ g/_A=f/_A $ and $g(S\setminus A)=\{y\}, y\in X\}$, cf. \rm(\cite{Bourbaki}, Definition 2, p.69).

\begin{theorem}
	For any topological space $X$, suppose $f:S\to X$ be a function  and $\mathcal{I}$, $\mathcal{K}$ ideals on $S$. Then 
	${\underset {B\in\mathcal{B}_f(\mathcal{I}^\mathcal{K})} \bigcap} \bar{B}\subset C_f(\mathcal{I}^\mathcal{K}) $. In addition if $X$ is first countable, 	${\underset {B\in\mathcal{B}_f(\mathcal{I}^\mathcal{K})} \bigcap} \bar{B}= C_f(\mathcal{I}^\mathcal{K})$.
	
\end{theorem}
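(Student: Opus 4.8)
The plan is to prove the two containments separately, obtaining the first for arbitrary ideals and calling on first countability only for the reverse one. Throughout I would write a generic member of the filter base as $B=\{g(s):s\notin K\}$, where $K\in\mathcal{K}$ and $g$ agrees with $f$ on some $A\in\mathcal{F}(\mathcal{I})$ and is constant, equal to $y$, on $S\setminus A$.

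For the inclusion $\bigcap_{B\in\mathcal{B}_f(\mathcal{I}^\mathcal{K})}\overline{B}\subset C_f(\mathcal{I}^\mathcal{K})$, let $p$ lie in the intersection. The natural witness for membership in $C_f(\mathcal{I}^\mathcal{K})$ is the maximal choice $M=S\in\mathcal{F}(\mathcal{I})$, whose associated modification of $f$ is $f$ itself; I would then show that $p$ is a $\mathcal{K}$-cluster point of $f$. Were it not, some open $U\ni p$ would satisfy $K_0:=\{s\in S:f(s)\in U\}\in\mathcal{K}$. Taking $A=S$ together with this $K_0$ produces the base element $B_0=\{f(s):s\notin K_0\}$, which by construction misses $U$; hence $\overline{B_0}\subset X\setminus U$ and $p\notin\overline{B_0}$, contradicting $p\in\bigcap_B\overline{B}$. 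This forces $p$ to be a $\mathcal{K}$-cluster point of $f$, so $p\in C_f(\mathcal{I}^\mathcal{K})$. Note that this half uses nothing about the space or the ideals.

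For the reverse inclusion under first countability, I would take $p\in C_f(\mathcal{I}^\mathcal{K})$ with witness $M\in\mathcal{F}(\mathcal{I})$ and modification $h$ (so $h=f$ on $M$ and $h\equiv p$ on $S\setminus M$) having $p$ as a $\mathcal{K}$-cluster point, and fix an arbitrary base element $B$ with data $(A,y,K)$ as above. Since $X$ is first countable it suffices to produce a sequence in $B$ converging to $p$; let $\{U_n\}$ be a decreasing local base at $p$. Because $p$ is a $\mathcal{K}$-cluster point of $h$ and $p\in U_n$, each $\{s:h(s)\in U_n\}=\{s\in M:f(s)\in U_n\}\cup(S\setminus M)$ lies outside $\mathcal{K}$. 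On the co-$\mathcal{I}$ set $M\cap A\in\mathcal{F}(\mathcal{I})$ the functions $h$, $f$ and $g$ coincide, so any $s\in M\cap A$ with $h(s)\in U_n$ and $s\notin K$ gives $g(s)\in B\cap U_n$. Selecting such indices $s_n$ successively and distinctly, the points $g(s_n)\in B$ converge to $p$, whence $p\in\overline{B}$ and, $B$ being arbitrary, $p\in\bigcap_B\overline{B}$.

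The crux of the reverse inclusion, and the step I expect to be the main obstacle, is verifying that the selection set $\{s\in M\cap A:f(s)\in U_n\}\setminus K$ is nonempty — indeed infinite, so that distinct $s_n$ can be extracted. One knows only that $\{s\in M:f(s)\in U_n\}\cup(S\setminus M)\notin\mathcal{K}$, whereas passing to $M\cap A$ and deleting $K$ throws away the pieces $S\setminus M$, $S\setminus A\in\mathcal{I}$ and $K\in\mathcal{K}$. The whole argument therefore turns on the interaction between $\mathcal{I}$ and $\mathcal{K}$: the discarded $\mathcal{I}$-sets must not be able to absorb a set that is not in $\mathcal{K}$, which is precisely where the hypotheses on the ideals must be invoked (first countability itself only serving to reduce closure to sequential convergence). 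Pinning down this combinatorial fact about how $\mathcal{I}$-small complements sit inside $\mathcal{K}$ is the heart of the matter; once it is secured, the sequence extraction and both containments follow routinely.
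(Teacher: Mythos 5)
Your first inclusion is correct, and it is essentially the paper's own argument: the paper runs the same contradiction with a generic $L\in\mathcal{F}(\mathcal{I})$, forming the base element $\{h(s):s\notin L_1\}\subseteq X\setminus U$ and noting $X\setminus U$ is closed; your specialization $M=S$ is harmless. The genuine gap is in the reverse inclusion, and it sits exactly where you flagged it: you never establish that the selection sets $\{s\in M\cap A: f(s)\in U_n\}\setminus K$ are nonempty (let alone infinite), and you defer this to ``the hypotheses on the ideals must be invoked''---but the theorem assumes no relation whatsoever between $\mathcal{I}$ and $\mathcal{K}$, so there is nothing to invoke. As written, your proof of $C_f(\mathcal{I}^\mathcal{K})\subseteq\bigcap_{B\in\mathcal{B}_f(\mathcal{I}^\mathcal{K})}\overline{B}$ stops precisely at its declared crux.

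You should know, however, that this obstacle is not a failure of technique: for arbitrary ideals the reverse inclusion is simply false, so no completion of your outline is possible. Take $S=\mathbb{N}$, $X=\mathbb{R}$, $\mathcal{K}=\mathrm{Fin}$, let $O$ and $E$ be the odd and even numbers, $\mathcal{I}=\{A\subseteq\mathbb{N}: A\cap O\ \mbox{finite}\}$, and $f(n)=n$ for $n\in O$, $f(n)=0$ for $n\in E$. With witness $M=O\in\mathcal{F}(\mathcal{I})$ the modified function is constantly $x$ on the infinite set $E$, so every $x\in\mathbb{R}$ is an $\mathcal{I}^\mathcal{K}$-cluster point, i.e.\ $C_f(\mathcal{I}^\mathcal{K})=\mathbb{R}$; yet for any $z\in\mathbb{R}$ the base element determined by $A=O$, constant $y\notin O\cup\{z\}$, and $K=\{z\}\cap O\in\mathrm{Fin}$ equals $(O\setminus K)\cup\{y\}$, a closed discrete subset of $\mathbb{R}$ omitting $z$, whence $\bigcap_{B\in\mathcal{B}_f(\mathcal{I}^\mathcal{K})}\overline{B}=\emptyset$. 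This also locates the defect in the paper's own proof, which evades (rather than solves) your obstacle: there, for $p\in C_f(\mathcal{I}^\mathcal{K})$ with witness modification $g$, only base elements of the special form $\{g(s):s\notin K\}$, $K\in\mathcal{K}$, generated by that single $g$ are treated (for these the sequence extraction does work, since the tail set contains $S\setminus M$ where $g\equiv p$), and the final line passes from $p\in\bigcap_{K\in\mathcal{K}}\overline{\{g(s):s\notin K\}}$ to membership in the intersection over all of $\mathcal{B}_f(\mathcal{I}^\mathcal{K})$, which is a strictly smaller set in general. Your instinct points to the correct repair: under an extra hypothesis such as $\mathcal{I}\subseteq\mathcal{K}$ with $\mathcal{K}$ admissible, the discarded sets $S\setminus M$, $S\setminus A$ and $K$ all lie in $\mathcal{K}$, so $\{s\in M\cap A:f(s)\in U_n\}\setminus K\notin\mathcal{K}$ is infinite and your extraction goes through; without some such hypothesis, neither your argument nor the paper's establishes the stated equality.
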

\begin{proof}
	Let $p\in{\underset {B\in\mathcal{B}_f(\mathcal{I}^\mathcal{K})} \bigcap} \bar{B}$. Suppose $p$ is not an $\mathcal{I}^\mathcal{K}$-cluster point of $f$. Then for any set $L\in \mathcal{F}(\mathcal{I})$ such that function 
	 $$h(s)=\left\{
	\begin{array}{ll}
		f(s) & \hbox{,~if~$s\in L;$} \\
		p & \hbox{,~otherwise}
	\end{array}
	\right.$$
	 has no $\mathcal{K}$-cluster point. So there exists an open set $U$ containing $p$ such that $L_1=\{s\in S : h(s)\in U\}\in\mathcal{K}$. This implies that $\{h(s): s\notin L_1\}\in\mathcal{B}_f(\mathcal{I}^\mathcal{K})$. It follows $p\in{\underset {B\in\mathcal{B}_f(\mathcal{I}^\mathcal{K})} \bigcap} \bar{B}\subset  \overline {\{h(s): s\notin L_1\}} =\{h(s) : h(s)\notin U\}\subset X\setminus U$ that is $p\in X\setminus U$,  which leads a contradiction.
	Conversely suppose $p\in C_f(\mathcal{I}^\mathcal{K})$. Then there exists a set $M\in \mathcal{F}(\mathcal{I})$ such that function $g: S\to X$ given by 
	 $$g(s)=\left\{
	\begin{array}{ll}
		f(s) & \hbox{,~if~$s\in M;$} \\
		p & \hbox{,~otherwise}
	\end{array}
	\right.$$
	has a $\mathcal{K}$-cluster point $p$. Let $\{W_i\}$ be a decreasing local base at $p$. Then for each $i$, $A_i=\{s\in S: g(s)\in W_i\}\notin\mathcal{K}$. Take $K\in\mathcal{K}$ and $B_i=A_i\setminus K\notin\mathcal{K}$, for each $i$. Put $i_1\in B_1$ and $i_{n+1}\in B_{n+1}\setminus \{i_1,i_2,...i_{n}\}$, for all $n$. Suppose $L=\{i_1,i_2,...\}$, then clearly $i_n\notin K$. Then $p$ is a limit point of the set $\{g(s): s\notin K\}$ that is $p\in \overline {\{g(s): s\notin K\}}$. Hence $p\in {\underset {k\in \mathcal{K}} \bigcap} \overline {\{g(s): s\notin K\}}$.
\end{proof}

Consider a Hausdorff uniform space $(Y, \mathbb{U})$. Suppose $K(Y)$ and $CL(Y)$ are the collection of all nonempty compact and closed subsets of $X$ respectively. Recall that a sequence $(y_n)$ in $(Y, \mathbb{U})$ is said to be  bounded if $\{y_n: n\in\omega\}\subset V[a]=\{b\in Y : (a,b)\in V\}$ for some $a\in Y$ and $V\in\mathbb{U}$ and $(Y, \mathbb{U})$ is said to be boundedly compact if every closed bounded subset in $Y$ is compact. 
Now denote by $bs(Y)$ the set of all bounded sequences in $(Y, \mathbb{U})$ and by $cs(Y)$ the set of all sequences $y=(y_n)$ in $Y$ with $C_y(\mathcal{I}^\mathcal{K})\neq \phi$. According to Lemma \ref{Unifcont}, for every sequences $y\in bs(Y)$,  $C_y(\mathcal{I}^\mathcal{K})\neq \phi$ i.e., $bs(Y)\subset cs(Y)$. If $(Y,\mathbb{U})$ is boundedly compact, then  $C_y(\mathcal{I}^\mathcal{K})$ is compact (it is closed and bounded). Hence the assignment $y\mapsto C_y(\mathcal{I}^\mathcal{K})$ defines a mapping $\Gamma_{\mathcal{I}^\mathcal{K}} $ of the set $bs(Y)$ to the set $K(Y)$ of all nonempty compact subsets of  $(Y, \mathbb{U})$. As in \cite{Das1}, endow $cs(Y)$ with a uniformity $\tilde{\mathbb{U}}$ defined by

$\tilde{\mathbb{U}}=\{\tilde{V}=((y_n),(z_n)):$ for all $n$, $(y_n\in V[z_n]$ and $z_n\in V[y_n]), V\in \mathbb{U}\}$ 

and the set $K(Y)$ with the Hausdorff-Bourbaki uniformity $\mathbb{U}_H$ inherited from the space $CL(Y)$ defined by 

$\mathbb{U}_H=\{\underline{V}=(B, C)\in CL(Y)\times CL(Y) : V\in \mathbb{U}, B\subset V[C]$ and $C\subset V[B]\}$.

Next we recall the definition of Vietoris topology on the set $\mathcal{A}$ of all non empty closed subsets of a Hausdorff topological space $Y$. For any subset $B$ of $Y$ put 

$B^-=\{C\in \mathcal{A} : C\cap B\neq \phi \}$ and $B^+= \{C\in \mathcal{A} : C\subset B\}$.

Then the upper Vietories topology on $\mathcal{A}$ denoted by $\tau_V^+$, is the topology with subbase $S_B^-=\{B^+ : B$ is open in $Y\}$ and the lower Vietories topology on $\mathcal{A}$ denoted by $\tau_V^+$, is the topology with subbase $S_B^-=\{B^- : B$ is open in $Y\}$. The Vietories topology on $\mathcal{A}$ denoted by $\tau_V$, is the topology with subbase $S_B^-\cup S_B^+$.

\begin{theorem}
	Suppose $(X, \mathbb{U})$ is boundedly compact uniform space. Then the mapping $\Gamma_{\mathcal{I}^\mathcal{K}} : (bs(X),\tilde{\mathbb{U}})\to (K(X), \mathbb{U}_H)$ is uniformly continuous.
\end{theorem}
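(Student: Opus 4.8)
The plan is to verify uniform continuity straight from the two definitions of uniformity. Given an arbitrary basic entourage $\underline{W}\in\mathbb{U}_H$ coming from an entourage $W\in\mathbb{U}$ (which I may take symmetric, since symmetric entourages form a base), I must produce a basic entourage $\tilde{V}\in\tilde{\mathbb{U}}$, coming from some $V\in\mathbb{U}$, so that $((y_n),(z_n))\in\tilde{V}$ forces $(C_y(\mathcal{I}^\mathcal{K}),C_z(\mathcal{I}^\mathcal{K}))\in\underline{W}$, that is, $C_y(\mathcal{I}^\mathcal{K})\subseteq W[C_z(\mathcal{I}^\mathcal{K})]$ and $C_z(\mathcal{I}^\mathcal{K})\subseteq W[C_y(\mathcal{I}^\mathcal{K})]$. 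Using the uniform-space axioms I would first fix symmetric entourages $W_1$ and $V$ with $W_1\circ W_1\subseteq W$ and $V\circ V\subseteq W_1$, and let $\tilde{V}$ be the entourage of $\tilde{\mathbb{U}}$ generated by $V$. Because $V$ is symmetric, the condition $((y_n),(z_n))\in\tilde{V}$ is symmetric in $y$ and $z$, so it suffices to prove one inclusion, say $C_y(\mathcal{I}^\mathcal{K})\subseteq W[C_z(\mathcal{I}^\mathcal{K})]$, the other following by interchanging $y$ and $z$.

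Fix $p\in C_y(\mathcal{I}^\mathcal{K})$. There is $M\in\mathcal{F}(\mathcal{I})$ such that the modified sequence $g$ (equal to $y$ on $M$ and to $p$ off $M$) has $p$ as a $\mathcal{K}$-cluster point; testing this condition on an open neighbourhood of $p$ contained in $V[p]$ gives $\{n:g(n)\in V[p]\}\notin\mathcal{K}$. The heart of the argument is to transport this $\mathcal{K}$-positivity from $y$ to $z$: if $n\in M$ and $y_n\in V[p]$, then $(p,y_n)\in V$ and $(y_n,z_n)\in V$ yield $(p,z_n)\in V\circ V\subseteq W_1$, so $z_n\in W_1[p]$, whence $\{n\in M:y_n\in V[p]\}\subseteq\{n:z_n\in W_1[p]\}$. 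I then need the left-hand set to be $\mathcal{K}$-positive, and here I expect the one genuine obstacle, caused by the filling value off $M$: the set $\{n:g(n)\in V[p]\}$ always contains $S\setminus M$, so its being outside $\mathcal{K}$ says nothing a priori about the $y$-values on $M$. I would split on $S\setminus M$. If $S\setminus M\notin\mathcal{K}$, then for the sequence $g'$ equal to $z$ on $M$ and $p$ off $M$, every neighbourhood $U$ of $p$ satisfies $\{n:g'(n)\in U\}\supseteq S\setminus M\notin\mathcal{K}$, so $p$ is already a $\mathcal{K}$-cluster point of $g'$ and $p\in C_z(\mathcal{I}^\mathcal{K})\subseteq W[C_z(\mathcal{I}^\mathcal{K})]$. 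If $S\setminus M\in\mathcal{K}$, then from $\{n\in M:y_n\in V[p]\}\cup(S\setminus M)\notin\mathcal{K}$ and the ideal properties of $\mathcal{K}$ I get $\{n\in M:y_n\in V[p]\}\notin\mathcal{K}$, and hence $\{n:z_n\in W_1[p]\}\notin\mathcal{K}$.

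In this second case I would finish using Lemma \ref{Unifcont} applied to the sequence $z$ and the set $A=\overline{W_1[p]}$. The set $A$ is closed, and since $\overline{B}\subseteq W_1[B]$ for every $B$, taking $B=W_1[p]$ gives $A\subseteq (W_1\circ W_1)[p]\subseteq W[p]$; in particular $A$ is bounded, so bounded compactness of $(X,\mathbb{U})$ makes $A$ compact. As $\{n:z_n\in A\}\supseteq\{n:z_n\in W_1[p]\}\notin\mathcal{K}$, Lemma \ref{Unifcont} supplies a point $q\in A\cap C_z(\mathcal{I}^\mathcal{K})$. Then $q\in A\subseteq W[p]$, i.e. $(p,q)\in W$, and symmetry of $W$ gives $p\in W[q]\subseteq W[C_z(\mathcal{I}^\mathcal{K})]$. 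This yields $C_y(\mathcal{I}^\mathcal{K})\subseteq W[C_z(\mathcal{I}^\mathcal{K})]$, and the symmetric inclusion follows identically, proving that $\Gamma_{\mathcal{I}^\mathcal{K}}$ is uniformly continuous. The only delicate points are the dichotomy on $S\setminus M$ forced by the off-$M$ filling value, and the inclusion $\overline{W_1[p]}\subseteq W[p]$ which lets the Hausdorff-Bourbaki closeness be read off from a single composite entourage.
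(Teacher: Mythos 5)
Your proof is correct, and its skeleton is the same as the paper's: decompose the target entourage into composites of smaller ones (the paper takes $V^{3}\subset U$; you take $V\circ V\subseteq W_{1}$, $W_{1}\circ W_{1}\subseteq W$), transport a $\mathcal{K}$-positive set of indices from one sequence to the other through $\tilde{V}$, and then apply Lemma \ref{Unifcont} to the closure of an entourage ball, which is compact by bounded compactness, to locate a cluster point of the second sequence within one entourage of the given point. The genuine difference is your dichotomy on whether $S\setminus M\in\mathcal{K}$, and it is not cosmetic: in the paper's notation, the paper passes from $\{n:w_{n}\in V^{2}[a]\}\notin\mathcal{K}$, where $w$ is the sequence obtained from $y$ by the filling value $a$ off $M$, directly to ``$\overline{V^{2}[a]}\cap C_{y}(\mathcal{I}^{\mathcal{K}})\neq\phi$ by Lemma \ref{Unifcont}'' --- that is, it establishes the hypothesis of the lemma for $w$ but states its conclusion for $y$. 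Since $w$ and $y$ differ only on $S\setminus M$, a set known to lie in $\mathcal{I}$ but not necessarily in $\mathcal{K}$, Theorem \ref{fin}$(iii)$ does not license this identification; it needs either a separate (true, but unstated) fact that altering a function on the complement of a set in $\mathcal{F}(\mathcal{I})$ leaves its $\mathcal{I}^{\mathcal{K}}$-cluster set unchanged, or exactly your case split. Your version sidesteps the issue cleanly: when $S\setminus M\in\mathcal{K}$ the positivity transfers to the \emph{unmodified} second sequence, so Lemma \ref{Unifcont} is applied to it directly and no identification of cluster sets is needed; when $S\setminus M\notin\mathcal{K}$ the point $p$ is outright an $\mathcal{I}^{\mathcal{K}}$-cluster point of the second sequence. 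So at this one step your write-up is tighter than the paper's own proof; the remaining bookkeeping (symmetric entourages, $\overline{W_{1}[p]}\subseteq (W_{1}\circ W_{1})[p]$, boundedness of the ball) is standard and correct.
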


\begin{proof}
	Suppose $U\in \mathbb{U}$. Then there exists $V\in \mathbb{U}$ such that $V^3\subset U$. Let $((x_n), (y_n))\in\tilde{V}$ and $a \in C_x(\mathcal{I}^\mathcal{K})$, where $x=(x_n)$. Then there exists a set $M\in \mathcal{F}(\mathcal{I})$ such that sequence $(z_n)$ given by $z_n=x_n$ if $n\in M$ and a otherwise, has $\mathcal{K}$-cluster point $a$. Therefore $K= \{n\in\omega : z_n\in V[a]\}\notin \mathcal{K}$. Consider a sequence $(w_n)$ given by  $w_n=y_n$ if $n\in M$ and a otherwise. Since for each n, $(x_n, y_n)\in V$ which implies $(z_n, w_n) \in V$. So for each $n\in K$, $(z_n,a)\in V$ and $(z_n, w_n) \in V$ and then $(w_n, a)\in V^2$ i.e., $w_n\in V^2[a]$. Hence $K\subset \{n\in\omega : w_n\in V^2[a]\}$. As $K\notin\mathcal{K}$, $\{n\in\omega : w_n\in V^2[a]\}\notin\mathcal{K}$. Since $\overline{V^2[a]}$ is compact, by Lemma \ref{Unifcont},  $\overline{V^2[a]}\cap C_y(\mathcal{I}^\mathcal{K})\neq \phi$ (where $y=(y_n))$, which implies $a\in V^3[C_y(\mathcal{I}^\mathcal{K})]$. Therefore $C_x(\mathcal{I}^\mathcal{K})\subset V^3[C_y(\mathcal{I}^\mathcal{K})]\subset U[C_y(\mathcal{I}^\mathcal{K})]$. Similarly $C_y(\mathcal{I}^\mathcal{K})\subset U[C_y(\mathcal{I}^\mathcal{K})]$ and it follows that $(C_x(\mathcal{I}^\mathcal{K}), C_y(\mathcal{I}^\mathcal{K}))\in \underline{U}\in \mathbb{U}_H$.
\end{proof}

\begin{theorem}
	Suppose $(X,\mathbb{U})$ is locally compact uniform space. Then the mapping $\Gamma_{\mathcal{I}^\mathcal{K}} : (cs(X),\tilde{\mathbb{U}})\to (CL(X), \tau_V ^-)$ is continuous.
\end{theorem}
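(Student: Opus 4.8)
The plan is to use the fact that a map into $(CL(X),\tau_V^-)$ is continuous as soon as the preimage of every subbasic member $B^-=\{C\in CL(X):C\cap B\neq\phi\}$, with $B$ open in $X$, is open. Thus I would fix an open set $B\subseteq X$ and a sequence $x=(x_n)\in cs(X)$ with $\Gamma_{\mathcal{I}^\mathcal{K}}(x)=C_x(\mathcal{I}^\mathcal{K})\in B^-$, choose a point $a\in C_x(\mathcal{I}^\mathcal{K})\cap B$, and try to manufacture an entourage $V\in\mathbb{U}$ such that every $y\in cs(X)$ with $(x,y)\in\tilde{V}$ satisfies $C_y(\mathcal{I}^\mathcal{K})\cap B\neq\phi$. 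This exhibits $\tilde{V}[x]\cap cs(X)$ inside $\Gamma_{\mathcal{I}^\mathcal{K}}^{-1}(B^-)$ and proves openness of the preimage, hence continuity.

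The role of local compactness is to provide a \emph{compact} target set sitting inside the prescribed open set $B$. First I would use local compactness to get a symmetric $W_0\in\mathbb{U}$ with $\overline{W_0[a]}$ compact, and then use the openness of $B$ together with the standard uniform-space estimate $\overline{W[a]}\subseteq W^2[a]$ (valid for symmetric $W$) to get a symmetric $W_2\in\mathbb{U}$ with $\overline{W_2[a]}\subseteq B$. Choosing a symmetric $V\in\mathbb{U}$ with $V^2\subseteq W_0\cap W_2$ then makes $\overline{V^2[a]}$ a compact subset of $B$, which is precisely the set I will feed to Lemma \ref{Unifcont}.

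Next I would transport ``largeness'' from $x$ to $y$ exactly as in the preceding (boundedly compact) theorem. Since $a\in C_x(\mathcal{I}^\mathcal{K})$, there is $M\in\mathcal{F}(\mathcal{I})$ for which the sequence $z_n=x_n$ if $n\in M$ and $z_n=a$ otherwise has $\mathcal{K}$-cluster point $a$, so $K=\{n:z_n\in V[a]\}\notin\mathcal{K}$. Given $y$ with $(x,y)\in\tilde{V}$, I put $w_n=y_n$ on $M$ and $w_n=a$ off $M$; then $(z_n,w_n)\in V$ for every $n$, and for $n\in K$ the relations $(a,z_n)\in V$ and $(z_n,w_n)\in V$ give $w_n\in V^2[a]$. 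Hence $K\subseteq\{n:w_n\in V^2[a]\}$ and therefore $\{n:w_n\in\overline{V^2[a]}\}\notin\mathcal{K}$. Applying Lemma \ref{Unifcont} to the compact set $\overline{V^2[a]}$ yields $\overline{V^2[a]}\cap C_y(\mathcal{I}^\mathcal{K})\neq\phi$, the passage from the modification $w$ to $y$ being handled just as before, since $w$ and $y$ agree on $M\in\mathcal{F}(\mathcal{I})$. Because $\overline{V^2[a]}\subseteq B$, this gives $C_y(\mathcal{I}^\mathcal{K})\cap B\neq\phi$, i.e. $\Gamma_{\mathcal{I}^\mathcal{K}}(y)\in B^-$, as required.

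The main obstacle, and the reason the hypothesis is local compactness rather than mere bounded compactness, is the second step: one must fit the compact set used to extract a cluster point \emph{inside} the given open set $B$, so that the cluster point of $y$ thereby obtained genuinely certifies membership of $\Gamma_{\mathcal{I}^\mathcal{K}}(y)$ in $B^-$. Once $\overline{V^2[a]}$ is pinned inside $B$, the remaining steps are the uniform-space triangle estimates and the non-$\mathcal{K}$ bookkeeping already rehearsed in the boundedly compact case. Finally, it is precisely the lower Vietoris target $\tau_V^-$ (which requires producing a single cluster point of $y$ lying in $B$, rather than a two-sided Hausdorff--Bourbaki comparison of $C_x(\mathcal{I}^\mathcal{K})$ and $C_y(\mathcal{I}^\mathcal{K})$) that makes plain continuity, rather than uniform continuity, the natural conclusion here, since the entourage $V$ is allowed to depend on $a$ and $B$.
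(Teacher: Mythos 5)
Your proposal is correct and follows essentially the same route as the paper's own proof: reduce to preimages of subbasic sets $B^-$, pick $a\in C_x(\mathcal{I}^\mathcal{K})\cap B$, use local compactness to pin a compact set of the form $\overline{V^2[a]}$ inside $B$, transfer the non-$\mathcal{K}$ index set from the modification of $x$ to the modification of a $\tilde{V}$-close sequence $y$ via the entourage triangle estimate, and apply Lemma \ref{Unifcont}. The only differences are cosmetic: you construct the compact set inside $B$ explicitly from symmetric entourages where the paper simply asserts the existence of $U\in\mathbb{U}$ with $\overline{U[a]}$ compact and contained in $O$, and you gloss over the same final step the paper does (passing from cluster points of the modified sequence $w$ to those of $y$, which needs a short Hausdorff argument rather than just agreement on a set of $\mathcal{F}(\mathcal{I})$).
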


\begin{proof}
	Suppose $O^-$ be a basic open set in $(CL(X), \tau_V ^-)$, where $O$ is open in $X$. Let $x=(x_n)\in \Gamma_{\mathcal{I}^\mathcal{K}}^{-1}(O^-)$. Then $C_x(\mathcal{I}^\mathcal{K})\cap O\neq \phi$. Let $a\in C_x(\mathcal{I}^\mathcal{K})\cap O$. Since  $(X,\mathbb{U})$ is locally compact, there exists $U\in\mathbb{U}$ such that $\overline{U[a]}$ is compact and $a\in \overline{U[a]}\subset O$. So for $U\in\mathbb{U}$, there exists $V\in \mathbb{U}$ such that $V^2\subset U$. As  $a\in C_x(\mathcal{I}^\mathcal{K})$, there exists a set $M\in\mathcal{F}(\mathcal{I})$ such that sequence $(y_n)$ given by $y_n=x_n$ if $n\in M$ and a otherwise, has $\mathcal{K}$-cluster point $a$. So $B=\{n\in \omega : y_n\in V[a]\}\notin \mathcal{K}$. Let  $z=(z_n)\in \tilde{V}[(x_n)]$, then for each $n$, $(x_n, z_n)\in V$. Consider a sequence  $(w_n)$ given by  $w_n=z_n$ if $n\in M$ and a otherwise. Again for each $n\in B$, $y_n\in V[a]$ and $w_n\in V[y_n]$ which implies $w_n\in V^2[a]\subset U[a]$. Therefore $B\subset \{n\in \omega : w_n\in U[a]\}\subset  \{n\in \omega : w_n\in \overline{U[a]}\}$. As $B\notin\mathcal{K}$, $\{n\in \omega : w_n\in \overline{U[a]}\}\notin \mathcal{K}$. Since $\overline{U[a]}$ is compact, by Lemma \ref{Unifcont}, $\overline{U[a]}\cap C_z(\mathcal{I}^\mathcal{K})\neq \phi$ and so $C_z(\mathcal{I}^\mathcal{K})\cap O\neq \phi$. Hence $z\in \Gamma_{\mathcal{I}^\mathcal{K}}^{-1}(O^-)$ and consequently $\tilde{V}[(x_n)]\subset  \Gamma_{\mathcal{I}^\mathcal{K}}^{-1}(O^-)$. Hence  $\Gamma_{\mathcal{I}^\mathcal{K}}^{-1}(O^-)$ is open in $(cs(X),\tilde{\mathbb{U}})$.
\end{proof}
\section{$\mathcal{I}$-Fr\'{e}chet Compactness and $\mathcal{I}^\mathcal{K}$-Fr\'{e}chet Compactness}

Let's define $\mathcal{I}$-nonthin function as a common generalization of $\mathcal{I}$-nonthin sequence stated in \cite{Singha}.
\begin{definition}
	 Suppose $X$ is a topological space and $A\subset S$. A function $f: A\to X$ is said to be $\mathcal{I}$-thin, where $\mathcal{I}$ is an ideal on $S$ if $A\in \mathcal{I}$, otherwise it is called $\mathcal{I}$-nonthin.
\end{definition}

\begin{definition}
	For a subset $C$ of a topological space $X$ and $x\in X$, the $\mathcal{I}$-closure of $C$ is denoted by  $\overline{C}^{\mathcal{I}}=$ $\{x\in X:$ there exists an $\mathcal{I}$-nonthin function $ f: A \to C$ that  $\mathcal{I}/_A$-converges to $x\}$ and the $\mathcal{I}^\mathcal{K}$-closure of $C$ is denoted  by $\overline{C}^{\mathcal{I}^\mathcal{K}}=$ $\{x\in X:$ there exists an $\mathcal{I}$-nonthin function $ f: A \to C$ that  $(\mathcal{I}/_A)^\mathcal{K}$-converges to $x\}$.
\end{definition}
	
\begin{theorem} \label{topolI}
	
	Suppose $\mathcal{I}$ , $\mathcal{K}$ are ideals on $S$ such that $\mathcal{K}\subset \mathcal{I}$. For any subset $C$ of a topological space $X$, $C \subset \overline{C}^{\mathcal{K}} \subset  \overline{C}^{\mathcal{I}^\mathcal{K}} \subset \bar{C}$. Furthermore, if $X$ is first countable, $\mathbb{N}\subset S$ and $\mathbb{N}\notin \mathcal{I}$, $\bar{C} = \overline{C}^{\mathcal{K}} =\overline{C}^{\mathcal{I}^\mathcal{K}} $
\end{theorem}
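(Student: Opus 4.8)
The plan is to establish the general chain $C\subseteq\overline{C}^{\mathcal{K}}\subseteq\overline{C}^{\mathcal{I}^{\mathcal{K}}}\subseteq\bar{C}$ one inclusion at a time, and then, under first countability, to obtain the asserted equalities by a single explicit sequence that simultaneously realises a closure point as a $\mathcal{K}$-limit and an $\mathcal{I}^{\mathcal{K}}$-limit. Throughout I take $\mathcal{I}$ and $\mathcal{K}$ to be admissible and nontrivial, so that every finite subset of $S$ lies in $\mathcal{K}$ while $S\notin\mathcal{K}$ and $S\notin\mathcal{I}$.

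The two outer inclusions are direct. For $C\subseteq\overline{C}^{\mathcal{K}}$, given $x\in C$ I would take the constant function $f:S\to C$, $f\equiv x$: its domain $S\notin\mathcal{K}$ is $\mathcal{K}$-nonthin and $\{s: f(s)\notin U\}=\emptyset\in\mathcal{K}/_S$ for every open $U\ni x$, so $f$ is $\mathcal{K}/_S$-convergent to $x$. For $\overline{C}^{\mathcal{I}^{\mathcal{K}}}\subseteq\bar{C}$, let $x\in\overline{C}^{\mathcal{I}^{\mathcal{K}}}$ be witnessed by an $\mathcal{I}$-nonthin $f:A\to C$ that $(\mathcal{I}/_A)^{\mathcal{K}}$-converges to $x$, with associated $M\in\mathcal{F}(\mathcal{I}/_A)$ and $g$ ($g=f$ on $M$, $g\equiv x$ off $M$) that is $\mathcal{K}/_A$-convergent to $x$. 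For any open $U\ni x$ I claim $f(s)\in U$ for some $s\in M$; otherwise $\{s\in A: g(s)\notin U\}=M$, forcing $M\in\mathcal{K}/_A$. Since $\mathcal{K}\subseteq\mathcal{I}$ gives $\mathcal{K}/_A\subseteq\mathcal{I}/_A$ and $A\setminus M\in\mathcal{I}/_A$, this yields $A\in\mathcal{I}/_A$, i.e. $A\in\mathcal{I}$, contradicting $\mathcal{I}$-nonthinness. Hence $U\cap C\neq\phi$ for every $U$ and $x\in\bar{C}$; this is exactly where the hypothesis $\mathcal{K}\subseteq\mathcal{I}$ is used. The analogous (and simpler) argument, using only $A\notin\mathcal{K}$, shows $\overline{C}^{\mathcal{K}}\subseteq\bar{C}$ directly as well.

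The remaining inclusion $\overline{C}^{\mathcal{K}}\subseteq\overline{C}^{\mathcal{I}^{\mathcal{K}}}$ is the crux, and I expect it to be the main obstacle. If $x\in\overline{C}^{\mathcal{K}}$ is witnessed by $f:A\to C$ with $A\notin\mathcal{K}$, $\mathcal{K}/_A$-convergent to $x$, and if moreover $A\notin\mathcal{I}$, then $f$ is already $\mathcal{I}$-nonthin, and taking $M=A\in\mathcal{F}(\mathcal{I}/_A)$ the very same $f$ is $(\mathcal{I}/_A)^{\mathcal{K}}$-convergent to $x$, so $x\in\overline{C}^{\mathcal{I}^{\mathcal{K}}}$. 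The delicate situation is the residual case $A\in\mathcal{I}\setminus\mathcal{K}$: now $A$ is $\mathcal{I}$-thin, so $f$ itself cannot serve as an $\mathcal{I}$-nonthin witness, and since $\mathcal{I}/_A$ is improper when $A\in\mathcal{I}$ one cannot reset $f$ on an $\mathcal{I}$-filter set while keeping its convergent values. The intended route is to re-realise $x$ over a co-$\mathcal{I}$ domain, namely to manufacture an $\mathcal{I}$-nonthin $h:B\to C$ with $B\notin\mathcal{I}$ whose values cluster at $x$ along $\mathcal{F}(\mathcal{I}/_B)$ up to a $\mathcal{K}$-small set, exploiting that the $\mathcal{K}/_A$-convergence of $f$ already exhibits points of $C$ arbitrarily close to $x$; I expect this redistribution to be the only genuinely technical step, with $\mathcal{K}\subseteq\mathcal{I}$ the hypothesis that makes the $\mathcal{K}$-witness promotable.

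Finally, under first countability I would assume $\mathbb{N}\subseteq S$ with $\mathbb{N}\notin\mathcal{I}$, whence $\mathcal{K}\subseteq\mathcal{I}$ forces $\mathbb{N}\notin\mathcal{K}$ too. Given $x\in\bar{C}$, fix a decreasing local base $\{U_n\}$ at $x$, choose $c_n\in U_n\cap C$, and set $f:\mathbb{N}\to C$ by $f(n)=c_n$. Then $\mathbb{N}$ is both $\mathcal{K}$-nonthin and $\mathcal{I}$-nonthin, and for each open $U\ni x$ some $U_{n_0}\subseteq U$ makes $\{n: f(n)\notin U\}\subseteq\{1,\dots,n_0-1\}$ finite, hence in $\mathcal{K}$ and in $\mathcal{K}/_{\mathbb{N}}$; thus $f$ is $\mathcal{K}/_{\mathbb{N}}$-convergent to $x$, giving $x\in\overline{C}^{\mathcal{K}}$, while taking $M=\mathbb{N}$ gives $(\mathcal{I}/_{\mathbb{N}})^{\mathcal{K}}$-convergence and $x\in\overline{C}^{\mathcal{I}^{\mathcal{K}}}$. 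Combined with $\overline{C}^{\mathcal{K}}\subseteq\bar{C}$ and $\overline{C}^{\mathcal{I}^{\mathcal{K}}}\subseteq\bar{C}$ this yields $\bar{C}=\overline{C}^{\mathcal{K}}=\overline{C}^{\mathcal{I}^{\mathcal{K}}}$. Note that this single construction bypasses the delicate middle inclusion: the same sequence realises $x$ as both a $\mathcal{K}$- and an $\mathcal{I}^{\mathcal{K}}$-limit, so the first countable equalities do not depend on resolving the obstacle in the general chain.
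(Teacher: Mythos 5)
Everything you actually prove is correct, and where it overlaps with the paper it is essentially the paper's own argument: your proof of $\overline{C}^{\mathcal{I}^{\mathcal{K}}}\subseteq\bar{C}$ is the paper's filter-trace argument rewritten with complements, and your sequence $f(n)=c_n\in U_n\cap C$ in the first countable case is exactly the paper's construction (the paper, like you, invokes admissibility only inside the proof so that finite sets lie in $\mathcal{K}$). The gap is precisely the one you flagged: the middle inclusion $\overline{C}^{\mathcal{K}}\subseteq\overline{C}^{\mathcal{I}^{\mathcal{K}}}$ is proved only when the witnessing domain satisfies $A\notin\mathcal{I}$, and for $A\in\mathcal{I}\setminus\mathcal{K}$ you offer only an intended route. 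You should know that this gap cannot be closed, because under $\mathcal{K}\subseteq\mathcal{I}$ the inclusion actually runs the other way. Unwinding the definition, $x\in\overline{C}^{\mathcal{I}^{\mathcal{K}}}$ if and only if there exist $M\notin\mathcal{I}$ and $h:M\to C$ with $\{s\in M:h(s)\notin U\}\in\mathcal{K}$ for every open $U\ni x$: indeed, given a witness $f:A\to C$ with its set $M\in\mathcal{F}(\mathcal{I}/_A)$, the restriction $h=f/_M$ has this property, and $M\notin\mathcal{I}$ because $A\notin\mathcal{I}$ while $A\setminus M\in\mathcal{I}$. Since $M\notin\mathcal{I}$ forces $M\notin\mathcal{K}$, every such witness already witnesses $x\in\overline{C}^{\mathcal{K}}$; hence $\overline{C}^{\mathcal{I}^{\mathcal{K}}}\subseteq\overline{C}^{\mathcal{K}}$ always, and your delicate case $A\in\mathcal{I}\setminus\mathcal{K}$ is exactly where the two closures can differ.

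Here is a counterexample to the claimed inclusion (under the paper's Definition of the $\mathcal{K}$-closure, i.e. substituting $\mathcal{K}$ for $\mathcal{I}$ throughout). Let $S=\mathbb{N}_1\sqcup\mathbb{N}_2$ be two disjoint copies of $\mathbb{N}$, let $\mathcal{U}$ be a free ultrafilter on $\mathbb{N}_1$, and put $\mathcal{K}=\{K\subseteq S:\mathbb{N}_1\setminus K\in\mathcal{U}\mbox{ and }K\cap\mathbb{N}_2\mbox{ finite}\}$ and $\mathcal{I}=\{K\subseteq S:K\cap\mathbb{N}_2\mbox{ finite}\}$; both are proper admissible ideals and $\mathcal{K}\subseteq\mathcal{I}$. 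Let $X=\{x\}\cup\mathbb{N}_1$ with the points of $\mathbb{N}_1$ isolated and $\{\{x\}\cup U:U\in\mathcal{U}\}$ the neighbourhoods of $x$, and let $C=\mathbb{N}_1$. The identity map on the $\mathcal{K}$-nonthin set $\mathbb{N}_1$ is $\mathcal{K}/_{\mathbb{N}_1}$-convergent to $x$, so $x\in\overline{C}^{\mathcal{K}}$. But a witness for $x\in\overline{C}^{\mathcal{I}^{\mathcal{K}}}$ would (restricting to $M\cap\mathbb{N}_2$, which is infinite since $M\notin\mathcal{I}$) produce an infinite $N\subseteq\mathbb{N}_2$ and $h:N\to\mathbb{N}_1$ such that $h^{-1}(J)$ is finite for every $J$ with $\mathbb{N}_1\setminus J\in\mathcal{U}$; then $h$ is finite-to-one (singletons are such $J$), so $h[N]$ is infinite, and splitting $h[N]$ into two infinite halves, the half not in $\mathcal{U}$ has complement in $\mathcal{U}$, hence finite preimage, yet also infinite preimage --- a contradiction. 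So $x\notin\overline{C}^{\mathcal{I}^{\mathcal{K}}}$. (This does not contradict the furthermore clause: this $X$ is not first countable at $x$.) Finally, compare with the paper: its proof never addresses $C\subseteq\overline{C}^{\mathcal{K}}$ or the middle inclusion at all; it proves exactly what you proved, namely $\overline{C}^{\mathcal{I}^{\mathcal{K}}}\subseteq\bar{C}$ and the first countable equalities. So your refusal to invent the redistribution step did not miss a trick contained in the paper --- it located a genuine error in the statement; the correct general chain is $C\subseteq\overline{C}^{\mathcal{I}^{\mathcal{K}}}\subseteq\overline{C}^{\mathcal{K}}\subseteq\bar{C}$.
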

\begin{proof}
	Consider $\alpha \in \overline{C}^{\mathcal{I}^\mathcal{K}}$, there exists an $\mathcal{I}$-nonthin function $f: A \to C$ that  $(\mathcal{I}/_A)^\mathcal{K}$-converges to $\alpha$. Therefore there is $M\in \mathcal{F}(\mathcal{I}/_A)$ such that the function $g: A \to C$ given by 
	$$g(s)=\left\{
	\begin{array}{ll}
		f(s) & \hbox{,~if~$s\in M;$} \\
		\alpha & \hbox{,~if~$s\in A\setminus M$}
	\end{array}
	\right.$$ 
	is $\mathcal{K}$-convergent to $\alpha$. So for any open set $U$ containing $\alpha$, $\{s\in A: g(s)\in U\}\in \mathcal{F}(\mathcal{K}/_A)$. Since $\mathcal{K}\subset \mathcal{I}$, the set $\{s\in A: g(s)\in U\}\in \mathcal{F}(\mathcal{I}/_A)$ and so $\{s\in A: f(s)\in U\}\in \mathcal{F}(\mathcal{I}/_A)$. Then there exists $p\in A$ such that $p\in \{s\in A: f(s)\in U\}$. Thus $f(p)\in C\cap U$ and hence  $f(p)\in \bar{C}$.
	Now suppose $\alpha \in \bar{C}$. Then there exists a function $f: \mathbb{N} \to C$ such that  $f: \mathbb{N} \to C$ is convergent to $\alpha$. Since $\mathcal{I}$, $\mathcal{K}$ admissible ideal on $S$,  $f: \mathbb{N} \to C$ is $\mathcal{I}$-convergent as well as $\mathcal{K}$-convergent to $\alpha$. Thus $\alpha \in \overline{C}^{\mathcal{I}^\mathcal{K}}$.
\end{proof}
\begin{definition}
	A subset $C$ of a topological space $X$ is called $\mathcal{I}$-closed if $\overline{C}^{\mathcal{I}}=C$ and $\mathcal{I}^\mathcal{K}$-closed if $\overline{C}^{\mathcal{I}^\mathcal{K}}=C$. 
\end{definition}

	  Theorem \ref{topolI} follows that closed subsets with $\mathcal{K} \subset\mathcal{I}$ are $\mathcal{I}^\mathcal{K}$-closed. If $(X,\tau)$ is a topological space, then $\tau_{\mathcal{I}^\mathcal{K}}$ $=\{G\subset X: X\setminus G$ is $\mathcal{I}^\mathcal{K}$-closed$\}$ forms a topology on $X$ having $\tau \subset \tau_{\mathcal{I}^\mathcal{K}}$. Therefore combining Theorem \ref{topol} and Theorem \ref{topolI}  $\tau_{\mathcal{I}^\mathcal{K}}$ = $\tau({\mathcal{I}^\mathcal{K}})$ in a first countable space.

\begin{definition}
	An $\mathcal{I}$-nonthin function $f: A\to X$ is $\mathcal{I}$-eventually constant at $\alpha$ if $\{s\in A: f(s)\neq \alpha\}\in\mathcal{I}/_A$.
\end{definition}

\begin{definition}
	A point $\alpha$ in a topological space $X$ is said to be an
	 $\mathcal{I}_{ev}$-limit point of $Y\subset X$ if there exists an $\mathcal{I}$-nonthin non $\mathcal{I}$-eventually constant function $f: A\rightarrow Y\setminus\{\alpha\}$ that $\mathcal{I}/_A$-converges to $\alpha$ and $\mathcal{I}^\mathcal{K}_{ev}$-limit point of $Y\subset X$ if there exists an $\mathcal{I}$-nonthin non $\mathcal{I}$-eventually constant function $f: A\rightarrow Y\setminus\{\alpha\}$ that $(\mathcal{I}/_A)^\mathcal{K}$-converges to $\alpha$.
\end{definition}

\begin{definition}
	A topological space $X$ is said to be $\mathcal{I}$-Fr\'{e}chet compact if every infinite subset of $X$ has an $\mathcal{I}_{ev}$-limit point and is called $\mathcal{I}^\mathcal{K}$-Fr\'{e}chet compact if every infinite subset of $X$ has an $\mathcal{I}^\mathcal{K}_{ev}$-limit point.
\end{definition}

 \begin{theorem}\label{charac}
 	A topological space $X$ is not $\mathcal{I}^\mathcal{K}$-Fr\'{e}chet compact if and only if there exists an infinite $\mathcal{I}^\mathcal{K}$-closed discrete subspace. 
 \end{theorem}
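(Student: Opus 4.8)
The plan is to mimic the classical equivalence ``$X$ is countably compact $\iff$ $X$ has no infinite closed discrete subspace'', with the ordinary closure replaced by the operator $\overline{\,\cdot\,}^{\mathcal{I}^\mathcal{K}}$ and accumulation points replaced by $\mathcal{I}^\mathcal{K}_{ev}$-limit points. Throughout I use the section's standing relation $\mathcal{K}\subset\mathcal{I}$ (as in Theorem \ref{topolI}), which guarantees that every $(\mathcal{I}/_A)^\mathcal{K}$-convergent function is already $\mathcal{I}/_A$-convergent to the same point: if $M\in\mathcal{F}(\mathcal{I}/_A)$ witnesses $(\mathcal{I}/_A)^\mathcal{K}$-convergence of $f$ to $x$, then for every open $U\ni x$ the set $\{s\in A:f(s)\notin U\}$ is contained in the union of a $\mathcal{K}/_A$-set and the $\mathcal{I}/_A$-set $A\setminus M$, hence lies in $\mathcal{I}/_A$. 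This single implication is the workhorse of both directions.

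For the easy direction, suppose $D$ is an infinite $\mathcal{I}^\mathcal{K}$-closed discrete subspace; I claim $D$, viewed as an infinite subset of $X$, has no $\mathcal{I}^\mathcal{K}_{ev}$-limit point, so that $X$ fails to be $\mathcal{I}^\mathcal{K}$-Fr\'{e}chet compact. If $\alpha$ were such a limit point, a witnessing $\mathcal{I}$-nonthin $f:A\to D\setminus\{\alpha\}$ that $(\mathcal{I}/_A)^\mathcal{K}$-converges to $\alpha$ would place $\alpha\in\overline{D}^{\mathcal{I}^\mathcal{K}}=D$. Discreteness then yields an open $U$ with $U\cap D=\{\alpha\}$, so $f(s)\in D\setminus\{\alpha\}$ forces $f(s)\notin U$ for every $s\in A$, i.e.\ $\{s\in A:f(s)\in U\}=\emptyset$. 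But the workhorse implication makes $f$ $\mathcal{I}/_A$-converge to $\alpha$, so $\{s\in A:f(s)\in U\}\in\mathcal{F}(\mathcal{I}/_A)$ must be nonempty (as $A\notin\mathcal{I}$), a contradiction.

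For the converse, choose an infinite set $Y\subset X$ with no $\mathcal{I}^\mathcal{K}_{ev}$-limit point and argue that $Y$ itself is $\mathcal{I}^\mathcal{K}$-closed and discrete. For closedness, take $x\in\overline{Y}^{\mathcal{I}^\mathcal{K}}$ with $x\notin Y$, witnessed by an $\mathcal{I}$-nonthin $f:A\to Y$ that $(\mathcal{I}/_A)^\mathcal{K}$-converges to $x$; since $x\notin Y$, automatically $f:A\to Y\setminus\{x\}$. Such an $f$ cannot be $\mathcal{I}$-eventually constant at any $c\in Y$: otherwise $f$ would $\mathcal{I}/_A$-converge both to $x$ and to $c$, and a $T_1$-separation of $x$ from $c$ (an open $U\ni x$ with $c\notin U$) would force the co-small set $\{s:f(s)=c\}$ into $\mathcal{I}/_A$, which is impossible. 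Hence $f$ is non-$\mathcal{I}$-eventually-constant, exhibiting $x$ as an $\mathcal{I}^\mathcal{K}_{ev}$-limit point of $Y$, a contradiction. For discreteness, suppose some $y_0\in Y$ is not isolated in $Y$; fixing a decreasing local base $\{U_n\}$ at $y_0$ (first countability), a $T_1$-separation lets me choose \emph{distinct} points $y_n\in(U_n\cap Y)\setminus\{y_0\}$, and the function $f:\mathbb{N}\to Y\setminus\{y_0\}$, $f(n)=y_n$, is $\mathcal{I}$-nonthin (as $\mathbb{N}\notin\mathcal{I}$), non-$\mathcal{I}$-eventually-constant, and ordinarily convergent to $y_0$, hence $(\mathcal{I}/_{\mathbb{N}})^\mathcal{K}$-convergent to $y_0$ by admissibility. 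This again produces an $\mathcal{I}^\mathcal{K}_{ev}$-limit point of $Y$, contradicting the choice of $Y$.

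The backward implication is routine; the main obstacle is the converse, specifically the discreteness step, where I must actually manufacture a witnessing function converging to a would-be accumulation point. In a general topological space, ideal convergence on a fixed index set need not be able to approach a non-isolated point, so this is exactly where first countability (to supply a countable local base, as already used in the proof of Theorem \ref{topol}) and a $T_1$-type separation (to guarantee distinct values, hence non-$\mathcal{I}$-eventual-constancy, and to pin down the limit) enter decisively. I therefore expect the converse to require these hypotheses, which I would state explicitly, whereas the forward direction needs only $\mathcal{K}\subset\mathcal{I}$.
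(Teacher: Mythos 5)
Your forward direction is essentially the paper's argument: both proofs use $\mathcal{I}^\mathcal{K}$-closedness to force the purported $\mathcal{I}^\mathcal{K}_{ev}$-limit point $\alpha$ into the discrete set $D$, then pick an open $U$ with $U\cap D=\{\alpha\}$ so that $\{s\in A:f(s)\in U\}$ is empty. The paper gets its contradiction from $\mathcal{K}$-convergence of the restriction of $f$ to some $B\in\mathcal{F}(\mathcal{I}/_A)$, which is a contradiction only if $B\notin\mathcal{K}$; your reduction to $\mathcal{I}/_A$-convergence via $\mathcal{K}\subset\mathcal{I}$ supplies exactly this positivity, so making that hypothesis explicit (the theorem itself states none) is a genuine improvement rather than a detour.

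The real divergence is the converse, and here you have done something the paper has not: the paper's second paragraph (``Now let $A\subset X$ be an infinite $\mathcal{I}^\mathcal{K}$-closed discrete subspace. If possible let, $X$ is $\mathcal{I}^\mathcal{K}$-Fr\'{e}chet compact\dots'') once more assumes an infinite $\mathcal{I}^\mathcal{K}$-closed discrete subspace and re-derives non-compactness, i.e.\ it is the first paragraph's implication in contrapositive dress, so the direction ``not $\mathcal{I}^\mathcal{K}$-Fr\'{e}chet compact $\Rightarrow$ there exists an infinite $\mathcal{I}^\mathcal{K}$-closed discrete subspace'' is never proved in the paper. Your proof of that direction is correct under the hypotheses you flag ($T_1$ for the closedness step; first countability, $T_1$, $\mathbb{N}\subset S$, $\mathbb{N}\notin\mathcal{I}$ and admissibility for the discreteness step), and those hypotheses are not removable: the paper's own example $X=\mathbb{N}\times\{0,1\}$ (discrete times indiscrete, not $T_1$) is not $\mathcal{I}^\mathcal{K}$-Fr\'{e}chet compact when $\mathcal{K}\subset\mathcal{I}$, since $\mathbb{N}\times\{0\}$ has no $\mathcal{I}^\mathcal{K}_{ev}$-limit point, and yet no nonempty discrete subspace $D$ is $\mathcal{I}^\mathcal{K}$-closed there: discreteness forces $(n,1-i)\notin D$ whenever $(n,i)\in D$, while the constant function at $(n,i)$ is $\mathcal{I}$-nonthin and $\mathcal{I}^\mathcal{K}$-converges to $(n,1-i)$, so $(n,1-i)\in\overline{D}^{\mathcal{I}^\mathcal{K}}\setminus D$. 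So the ``only if'' half of the theorem is false as stated, the paper's proof of it is simply absent, and your amended statement (the converse under the extra hypotheses, the forward direction under $\mathcal{K}\subset\mathcal{I}$ alone) is the correct repair. One minor addition: in the closedness step you should also record the trivial inclusion $Y\subset\overline{Y}^{\mathcal{I}^\mathcal{K}}$, via constant functions and properness of $\mathcal{I}$.
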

\begin{proof}
	Let $G$ be an infinite $\mathcal{I}^\mathcal{K}$-closed discrete subspace. Then $\overline{G}^{\mathcal{I}^\mathcal{K}}=G$. If $\alpha$ is an  $\mathcal{I}^\mathcal{K}_{ev}$-limit point of $G$, there exists an $\mathcal{I}$-nonthin non $\mathcal{I}$-eventually constant function $f:A\to G\setminus \{\alpha\}$ that $(\mathcal{I}/_A)^\mathcal{K}$-converges to $\alpha$. There is $B\in \mathcal{F}(\mathcal{I}/_A)$ such that the function $g: B \to G\setminus \{\alpha\}$ given by $g(s)=f(s), s\in B$ is $\mathcal{K}$-convergent to $\alpha$. Since $G$ is a discrete subspace, there is an open set $U$ containing $\alpha$, $\{s\in B: g(s)\in U\}$ is an empty set, which is a contradiction. Therefore $X$ is not  $\mathcal{I}^\mathcal{K}$-Fr\'{e}chet compact space. Now let $A\subset X$ be an infinite $\mathcal{I}^\mathcal{K}$-closed discrete subspace. If possible let, $X$ is  $\mathcal{I}^\mathcal{K}$-Fr\'{e}chet compact, then $A$ has an $\mathcal{I}^\mathcal{K}_{ev}$-limit point say $l$. But since $A$ is $\mathcal{I}^\mathcal{K}$-closed, $l\in A$. Also $A$ is discrete subspace, which contradicts the fact that $l$ is an $\mathcal{I}^\mathcal{K}_{ev}$-limit point. 
\end{proof}
 
 \begin{corollary}
 	A topological space $X$ is $\mathcal{I}^\mathcal{K}$-Fr\'{e}chet compact if and only if every $\mathcal{I}^\mathcal{K}$-closed discrete subspaces are finite.
 \end{corollary}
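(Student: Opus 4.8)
The plan is to derive this corollary directly from Theorem \ref{charac} by taking the logical contrapositive of the biconditional established there. Theorem \ref{charac} asserts that $X$ fails to be $\mathcal{I}^\mathcal{K}$-Fr\'{e}chet compact if and only if $X$ admits an infinite $\mathcal{I}^\mathcal{K}$-closed discrete subspace. Since the corollary characterizes $\mathcal{I}^\mathcal{K}$-Fr\'{e}chet compactness (rather than its failure) in terms of the absence of such subspaces, the natural route is simply to negate both sides of that biconditional and observe that a biconditional is preserved under simultaneous negation.

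Concretely, I would argue as follows. The negation of the left-hand condition ``$X$ is not $\mathcal{I}^\mathcal{K}$-Fr\'{e}chet compact'' is precisely ``$X$ is $\mathcal{I}^\mathcal{K}$-Fr\'{e}chet compact''. For the right-hand side, the negation of the existential statement ``there exists an infinite $\mathcal{I}^\mathcal{K}$-closed discrete subspace'' is the universally quantified statement ``there is no infinite $\mathcal{I}^\mathcal{K}$-closed discrete subspace'', which is exactly the assertion that every $\mathcal{I}^\mathcal{K}$-closed discrete subspace is finite. Since $P \Leftrightarrow Q$ is logically equivalent to $(\neg P) \Leftrightarrow (\neg Q)$, applying this equivalence to Theorem \ref{charac} yields immediately that $X$ is $\mathcal{I}^\mathcal{K}$-Fr\'{e}chet compact if and only if every $\mathcal{I}^\mathcal{K}$-closed discrete subspace of $X$ is finite.

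There is essentially no analytic or topological obstacle here; the only point requiring care is the correct handling of the quantifier when negating the existential clause, ensuring that ``no infinite $\mathcal{I}^\mathcal{K}$-closed discrete subspace exists'' is faithfully rendered as ``all $\mathcal{I}^\mathcal{K}$-closed discrete subspaces are finite''. Once this is made explicit, the corollary is an immediate restatement of the already-proved theorem, and no further construction of functions, filters, or limit points is needed.
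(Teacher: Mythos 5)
Your proposal is correct and matches the paper's intent exactly: the paper states this corollary without proof, precisely because it follows from Theorem \ref{charac} by negating both sides of the biconditional, which is the argument you give. No further work is needed.
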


 \begin{corollary}
 	In a topological space $\mathcal{I}^\mathcal{K}$-Fr\'{e}chet compactness with $\mathcal{K}\subset\mathcal{I}$ implies Fr\'{e}chet compactness.
 \end{corollary}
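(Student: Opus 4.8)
The plan is to argue directly from the definitions, extracting an ordinary limit point from an $\mathcal{I}^\mathcal{K}_{ev}$-limit point, with the inclusion $\mathcal{K}\subset\mathcal{I}$ doing the essential work. Let $Y$ be an arbitrary infinite subset of $X$; since Fr\'{e}chet compactness asks that every infinite subset possess a limit point, it suffices to produce one for $Y$. As $X$ is $\mathcal{I}^\mathcal{K}$-Fr\'{e}chet compact, $Y$ has an $\mathcal{I}^\mathcal{K}_{ev}$-limit point $\alpha$, so there is an $\mathcal{I}$-nonthin function $f:A\to Y\setminus\{\alpha\}$ (whence $A\notin\mathcal{I}$) that $(\mathcal{I}/_A)^\mathcal{K}$-converges to $\alpha$. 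Unwinding this, I would fix $M\in\mathcal{F}(\mathcal{I}/_A)$ for which the modified function $g$, agreeing with $f$ on $M$ and equal to $\alpha$ on $A\setminus M$, is $\mathcal{K}$-convergent to $\alpha$.

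The heart of the argument is to verify that every open set $U$ containing $\alpha$ meets $Y\setminus\{\alpha\}$. By the $\mathcal{K}$-convergence of $g$, the set $W=\{s\in A:g(s)\in U\}$ lies in $\mathcal{F}(\mathcal{K}/_A)$. Here I would invoke the hypothesis $\mathcal{K}\subset\mathcal{I}$, which gives $\mathcal{K}/_A\subset\mathcal{I}/_A$ and hence $\mathcal{F}(\mathcal{K}/_A)\subset\mathcal{F}(\mathcal{I}/_A)$; thus both $W$ and $M$ belong to $\mathcal{F}(\mathcal{I}/_A)$. Since $A\notin\mathcal{I}$, the trace ideal $\mathcal{I}/_A$ is proper and so $\mathcal{F}(\mathcal{I}/_A)$ is a proper filter, whence $M\cap W\in\mathcal{F}(\mathcal{I}/_A)$ is nonempty. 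Any $s\in M\cap W$ satisfies $g(s)=f(s)\in U$ together with $f(s)\in Y\setminus\{\alpha\}$, so $U$ contains a point of $Y$ distinct from $\alpha$. As $U$ was arbitrary, $\alpha$ is a limit point of $Y$, and $X$ is Fr\'{e}chet compact.

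The step I expect to be the crux, and the only place where $\mathcal{K}\subset\mathcal{I}$ is genuinely needed, is guaranteeing that the witnessing index $s$ can be taken inside $M$. Membership $W\in\mathcal{F}(\mathcal{K}/_A)$ by itself only yields some $s$ with $g(s)\in U$, and such an $s$ might lie in $A\setminus M$, where $g(s)=\alpha$ and no point of $Y\setminus\{\alpha\}$ is produced. The inclusion $\mathcal{F}(\mathcal{K}/_A)\subset\mathcal{F}(\mathcal{I}/_A)$ is exactly what forces $W$ to be compatible with $M$ inside the proper filter $\mathcal{F}(\mathcal{I}/_A)$, pinning the witness down within $M$.

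Alternatively, one could proceed by contraposition via Theorem \ref{charac}: a space that is not Fr\'{e}chet compact carries an infinite closed discrete subspace $D$, which by Theorem \ref{topolI} (every closed set is $\mathcal{I}^\mathcal{K}$-closed when $\mathcal{K}\subset\mathcal{I}$) is an infinite $\mathcal{I}^\mathcal{K}$-closed discrete subspace, so $X$ cannot be $\mathcal{I}^\mathcal{K}$-Fr\'{e}chet compact. I prefer the direct route above, since it avoids appealing to the classical characterization of non-Fr\'{e}chet-compactness by infinite closed discrete subspaces, which is not developed in the text for the unqualified notion.
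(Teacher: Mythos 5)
Your direct argument is correct, but it takes a genuinely different route from the paper's. The paper offers no standalone proof: the statement is placed as a corollary of Theorem \ref{charac}, and the intended derivation is exactly the contrapositive you sketch at the end — if $X$ is not Fr\'{e}chet compact it contains an infinite subset with no limit point, which is an infinite closed discrete subspace; by the remark following Theorem \ref{topolI}, closed sets are $\mathcal{I}^\mathcal{K}$-closed when $\mathcal{K}\subset\mathcal{I}$; and then Theorem \ref{charac} forces $X$ not to be $\mathcal{I}^\mathcal{K}$-Fr\'{e}chet compact. What your direct proof buys is self-containment and precision: it works purely from the definitions, and it isolates the exact mechanism by which $\mathcal{K}\subset\mathcal{I}$ is used, namely that $W=\{s\in A: g(s)\in U\}\in\mathcal{F}(\mathcal{K}/_A)\subset\mathcal{F}(\mathcal{I}/_A)$ and $M\in\mathcal{F}(\mathcal{I}/_A)$ must intersect because $\mathcal{I}$-nonthinness ($A\notin\mathcal{I}$) makes the trace ideal $\mathcal{I}/_A$ proper — so the witness $s\in M\cap W$ satisfies $g(s)=f(s)\in U\cap(Y\setminus\{\alpha\})$. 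This correctly identifies the failure mode when $\mathcal{K}\not\subset\mathcal{I}$: the $\mathcal{K}$-witness could land in $A\setminus M$ where $g\equiv\alpha$. What the paper's route buys is economy: given the already-proved characterization of non-$\mathcal{I}^\mathcal{K}$-Fr\'{e}chet compactness, the corollary is immediate, at the cost of invoking the (standard but unproved in the text) fact that a space failing Fr\'{e}chet compactness contains an infinite closed discrete subspace. Both arguments are sound; note only that your unwinding quietly discards the non-$\mathcal{I}$-eventually-constant clause of the definition of $\mathcal{I}^\mathcal{K}_{ev}$-limit point, which is harmless here since $f$ maps into $Y\setminus\{\alpha\}$ and any witness already produces a point of $Y$ distinct from $\alpha$.
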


The following theorem is just a translation of the proof of the Theorem \ref{charac} in terms of $\mathcal{I}$-Fr\'{e}chet compact.

\begin{theorem}
A topological space $X$ is not $\mathcal{I}$-Fr\'{e}chet compact if and only if there exists an infinite $\mathcal{I}$-closed discrete subspace. 
\end{theorem}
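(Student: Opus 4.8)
The plan is to mirror the proof of Theorem \ref{charac}, simply removing the intermediate ideal $\mathcal{K}$: at each point where that argument passes from $(\mathcal{I}/_A)^\mathcal{K}$-convergence to a set $B\in\mathcal{F}(\mathcal{I}/_A)$ carrying $\mathcal{K}$-convergence, I would instead use $\mathcal{I}/_A$-convergence directly. I would prove the two implications separately, the common mechanism being that $\mathcal{I}$-closedness confines any $\mathcal{I}_{ev}$-limit point of a set to the set itself, while discreteness forbids a point of the set from being such a limit point.

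First I would show that the existence of an infinite $\mathcal{I}$-closed discrete subspace $G$ prevents $\mathcal{I}$-Fr\'{e}chet compactness. Assuming $X$ were $\mathcal{I}$-Fr\'{e}chet compact, the infinite set $G$ would have an $\mathcal{I}_{ev}$-limit point $\alpha$, witnessed by an $\mathcal{I}$-nonthin, non-$\mathcal{I}$-eventually-constant $f:A\to G\setminus\{\alpha\}$ that $\mathcal{I}/_A$-converges to $\alpha$. Since $f$ takes values in $G$, the definition of the $\mathcal{I}$-closure together with $\overline{G}^{\mathcal{I}}=G$ gives $\alpha\in G$. Discreteness then supplies an open $U$ with $U\cap G=\{\alpha\}$; as $f$ avoids $\alpha$, the set $\{s\in A: f(s)\in U\}$ is empty, contradicting the requirement $\{s\in A: f(s)\in U\}\in\mathcal{F}(\mathcal{I}/_A)$ coming from $\mathcal{I}/_A$-convergence (which would force $A\in\mathcal{I}$, against $f$ being $\mathcal{I}$-nonthin).

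For the converse I would start from an infinite $Y\subset X$ with no $\mathcal{I}_{ev}$-limit point and show that $Y$ is $\mathcal{I}$-closed and discrete. For $\mathcal{I}$-closedness, any $\alpha\in\overline{Y}^{\mathcal{I}}\setminus Y$ arises from an $\mathcal{I}$-nonthin $f:A\to Y=Y\setminus\{\alpha\}$ that $\mathcal{I}/_A$-converges to $\alpha$; provided $f$ is not $\mathcal{I}$-eventually constant, $\alpha$ is an $\mathcal{I}_{ev}$-limit point of $Y$, a contradiction, so $\overline{Y}^{\mathcal{I}}=Y$. For discreteness I would argue contrapositively: a non-isolated $y\in Y$ should yield an $\mathcal{I}$-nonthin, non-$\mathcal{I}$-eventually-constant function into $Y\setminus\{y\}$ that $\mathcal{I}/_A$-converges to $y$, exhibiting $y$ as a forbidden $\mathcal{I}_{ev}$-limit point.

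The main obstacle I expect is the discreteness step of the converse. Classically a non-isolated point of $Y$ is the limit of a sequence drawn from $Y\setminus\{y\}$, but here I must produce an $\mathcal{I}$-nonthin function on some $A\subset S$ that $\mathcal{I}/_A$-converges to $y$ and is genuinely non-$\mathcal{I}$-eventually constant; likewise, excluding the $\mathcal{I}$-eventually-constant alternative in the $\mathcal{I}$-closedness step relies on uniqueness of $\mathcal{I}/_A$-limits, hence on a Hausdorff-type separation. To keep the present theorem a faithful translation of Theorem \ref{charac}, I would carry out both steps under the same hypotheses used there, so that the construction of the required function and the disposal of the eventually-constant case reduce to the corresponding moves already made in that proof.
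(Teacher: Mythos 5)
Your forward implication (an infinite $\mathcal{I}$-closed discrete subspace rules out $\mathcal{I}$-Fr\'{e}chet compactness) is correct and is exactly the paper's argument for Theorem \ref{charac} with $\mathcal{K}$ stripped out: $\mathcal{I}$-closedness pins any putative $\mathcal{I}_{ev}$-limit point $\alpha$ inside $G$, discreteness supplies an open $U$ with $U\cap G=\{\alpha\}$, and since the witnessing function avoids $\alpha$, the set $\{s\in A: f(s)\in U\}$ is empty, forcing $A\in\mathcal{I}$ against $\mathcal{I}$-nonthinness. That half is fine.

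The converse ("only if") direction is where your proposal has a genuine gap, and your stated fallback cannot close it. First, the set $Y$ with no $\mathcal{I}_{ev}$-limit point need not itself be discrete: take $S=\mathbb{N}$, $\mathcal{I}=\mathcal{K}=$ Fin, $X=\beta\mathbb{N}$ and $Y=\mathbb{N}\cup\{p\}$ for a free ultrafilter $p$. Every convergent sequence in $\beta\mathbb{N}$ is eventually constant, so no point of $X$ is an $\mathcal{I}_{ev}$-limit point of $Y$, yet $p$ is not isolated in $Y$; the infinite $\mathcal{I}$-closed discrete set required by the theorem is a proper subset of $Y$ (namely $\mathbb{N}$). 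So your contrapositive discreteness step --- "a non-isolated point of $Y$ yields an $\mathcal{I}$-nonthin, non-$\mathcal{I}$-eventually-constant function $\mathcal{I}/_A$-converging to it" --- is false in a general topological space, exactly the obstacle you flagged; one would have to extract a suitable infinite subset of $Y$, which is a new argument, not a translation. Second, disposing of the $\mathcal{I}$-eventually-constant alternative in your closedness step needs a $T_1$/Hausdorff-type hypothesis that the theorem does not assume, as you also concede. Third, and decisively for your plan: there are no "corresponding moves already made" in Theorem \ref{charac} to reduce to. Both paragraphs of the paper's proof of Theorem \ref{charac} establish the same implication (existence of an infinite $\mathcal{I}^\mathcal{K}$-closed discrete subspace implies non-compactness); the converse direction is never argued anywhere in the paper, whose entire proof of the present theorem is the instruction to translate Theorem \ref{charac}. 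So your proposal leaves the only-if direction unproven --- a gap it shares with the paper itself, but a gap nonetheless, and one that apparently cannot be repaired without extra hypotheses (e.g.\ separation axioms) or a genuinely new extraction argument.
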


\begin{theorem}
	${\mathcal{I}^\mathcal{K}}$- Fr\'{e}chet compactness implies $\mathcal{I}$- Fr\'{e}chet compactness, provided $\mathcal{K}\subset\mathcal{I}$.
\end{theorem}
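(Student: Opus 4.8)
The plan is to argue directly from the definitions, showing that under the hypothesis $\mathcal{K}\subset\mathcal{I}$ every $\mathcal{I}^\mathcal{K}_{ev}$-limit point of a set is already an $\mathcal{I}_{ev}$-limit point of that set; the statement then drops out immediately by quantifying over all infinite subsets. Concretely, I would let $Y\subset X$ be an arbitrary infinite subset, use $\mathcal{I}^\mathcal{K}$-Fr\'echet compactness to produce an $\mathcal{I}^\mathcal{K}_{ev}$-limit point $\alpha$ of $Y$, and then upgrade the witnessing function to a witness of $\mathcal{I}_{ev}$-limit pointhood for the same $\alpha$.

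First I would isolate the single convergence fact that does all the work: if $\mathcal{K}\subset\mathcal{I}$ and a function $h:A\to X$ is $(\mathcal{I}/_A)^\mathcal{K}$-convergent to $\alpha$, then $h$ is $(\mathcal{I}/_A)$-convergent to $\alpha$. To see this, choose $M\in\mathcal{F}(\mathcal{I}/_A)$ witnessing the $(\mathcal{I}/_A)^\mathcal{K}$-convergence, so that the modified function $g$ (equal to $h$ on $M$ and to $\alpha$ off $M$) is $\mathcal{K}$-convergent to $\alpha$. For an arbitrary open $U\ni\alpha$ one has the inclusion $\{s\in A: h(s)\notin U\}\subset (A\setminus M)\cup\{s\in M: g(s)\notin U\}$, valid since $h=g$ on $M$. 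The right-hand side is the union of $A\setminus M\in\mathcal{I}/_A$ with a subset of $\{s\in A: g(s)\notin U\}\in\mathcal{K}$, and the latter set, being a subset of $A$ lying in $\mathcal{K}\subset\mathcal{I}$, belongs to $\mathcal{I}/_A$; hence the whole right-hand side, and therefore $\{s\in A: h(s)\notin U\}$, lies in $\mathcal{I}/_A$. This is exactly the computation already carried out inside the proof of Theorem \ref{topolI}, so it can simply be cited rather than redone.

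With this in hand the main argument is short. Given $\alpha$ an $\mathcal{I}^\mathcal{K}_{ev}$-limit point of $Y$, witnessed by an $\mathcal{I}$-nonthin, non $\mathcal{I}$-eventually constant function $f:A\to Y\setminus\{\alpha\}$ that $(\mathcal{I}/_A)^\mathcal{K}$-converges to $\alpha$, the convergence fact above shows that this \emph{same} $f$ is $(\mathcal{I}/_A)$-convergent to $\alpha$. The properties of being $\mathcal{I}$-nonthin (i.e. $A\notin\mathcal{I}$) and not $\mathcal{I}$-eventually constant are intrinsic to $f$ and are untouched by the passage to the weaker convergence, so $f$ now witnesses that $\alpha$ is an $\mathcal{I}_{ev}$-limit point of $Y$. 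Since $Y$ was an arbitrary infinite subset, $X$ is $\mathcal{I}$-Fr\'echet compact.

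The only genuinely delicate point is the convergence fact, and its difficulty is concentrated entirely in the trace inclusion $\mathcal{K}/_A\subset\mathcal{I}/_A$ coming from $\mathcal{K}\subset\mathcal{I}$; everything downstream is formal bookkeeping. An equivalent route, avoiding the limit-point bookkeeping, is the contrapositive via the two characterization theorems: the convergence fact yields $\overline{C}^{\mathcal{I}^\mathcal{K}}\subset\overline{C}^{\mathcal{I}}$, while Theorem \ref{topolI} gives $C\subset\overline{C}^{\mathcal{I}^\mathcal{K}}$, so every $\mathcal{I}$-closed set is $\mathcal{I}^\mathcal{K}$-closed; an infinite $\mathcal{I}$-closed discrete subspace is then an infinite $\mathcal{I}^\mathcal{K}$-closed discrete subspace, and applying the two ``not Fr\'echet compact iff there is an infinite closed discrete subspace'' characterizations gives the claim. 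I would keep the direct proof as the main one, since it requires nothing beyond the convergence fact.
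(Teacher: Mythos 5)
Your proposal is correct and follows essentially the same route as the paper: take the witnessing $\mathcal{I}$-nonthin, non $\mathcal{I}$-eventually constant function for the $\mathcal{I}^\mathcal{K}_{ev}$-limit point and observe that, because $\mathcal{K}\subset\mathcal{I}$, the same function witnesses that $\alpha$ is an $\mathcal{I}_{ev}$-limit point. The only difference is one of care: the paper compresses (indeed slightly misstates) the step from the modified function $g$ back to $f$ into the single phrase ``Since $\mathcal{K}\subset\mathcal{I}$,'' whereas you spell out the inclusion $\{s\in A: f(s)\notin U\}\subset (A\setminus M)\cup\{s\in M: g(s)\notin U\}$ that justifies it.
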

\begin{proof}
	Suppose $X$ is ${\mathcal{I}^\mathcal{K}}$- Fr\'{e}chet compact space and $Y$ is infinite subset of $X$. Then $Y$ has an ${\mathcal{I}^\mathcal{K}}_{ev}$-limit point say $\alpha$, that is there is an $\mathcal{I}$-nonthin non $\mathcal{I}$-eventually constant function $f: A\to Y\setminus \{\alpha\}$ that $\mathcal{K}$-converges to $\alpha$. Since $\mathcal{K}\subset \mathcal{I}$, $\alpha$ is a $\mathcal{I}_{ev}$-limit point of $Y$.
\end{proof}

\begin{example}
	Consider the space $X=\{\frac{1}{n}; n\in \mathbb{N}\}\cup \{0\}$ where discrete topology on $\{\frac{1}{n}; n\in \mathbb{N}\}$ and cofinite topology containing $0$.  Let $\mathcal{I}=\{A\subset \mathbb{N} :A \cap\Delta_i$ is finite for all but finitely many i$\}$, where $\mathbb{N}={\overset{\infty}{\underset{j=1}\cup}} \Delta_j$ be a decomposition of $\mathbb{N}$ such that each $\Delta_j$ is infinite and $\Delta_i\cap\Delta_j=\phi$ for $i\neq j$ and $\mathcal{K}=Fin$. Then for any infinite subset $A$ of $X$, $0$ is an $\mathcal{I}_{ev}$-limit point of $A$. Thus $X$ is $\mathcal{I}$-Fr\'{e}chet compact. But there is no $\mathcal{I}$-nonthin non $\mathcal{I}$-eventual constant sequence that is $\mathcal{K}$-convergent, so $X$ is not $\mathcal{I}^\mathcal{K}$-Fr\'{e}chet compact.
\end{example}

\begin{definition}
	Let $\mathcal{I}$, $\mathcal{K}$ be ideals on $S$. $\mathcal{I}$ is said to satisfy shrinking condition(A) with respect to $\mathcal{K}$ or shrinking condition A$(\mathcal{I},\mathcal{K})$ holds, if for any sequence $\{A_i\}$ of sets none in $\mathcal{I}$, there exists a sequence $\{B_i\}$ of sets in $\mathcal{K}$ such that $B_i\subset$ $A_i$ and ${\overset{\infty}{\underset{i=1}\cup}} B_i\notin \mathcal{I}$.
\end{definition}
The following example is an witness of such ideal.
\begin{example}
	\textup{Let $\mathbb{N}={\overset{\infty}{\underset{j=1}\cup}} \Delta_j$ be a decomposition of $\mathbb{N}$ such that each $\Delta_j$ is infinite and $\Delta_i\cap\Delta_j=\phi$ for $i\neq j$. Consider $\mathcal{I}=$ $\{A\subset \mathbb{N} :A \cap\Delta_i$ is finite for all but finitely many i$\}$ and  $\mathcal{K}$ denote the class of all subset $A$ of $\mathbb{N}$ which intersect at most finite number of $\Delta_j$s \cite{Kostyrkoa}. Then $\mathcal{I}$ satisfies shrinking condition(A) with respect to $\mathcal{K}$.
	}	
\end{example}

\begin{theorem}
	Suppose $X$ is first countable space and $\mathcal{I}$, $\mathcal{K}$ are admissible ideals on $S$. If shrinking condition A$(\mathcal{I},\mathcal{K})$ holds, then $\mathcal{I}$-Fr\'{e}chet compactness implies $\mathcal{I}^\mathcal{K}$- Fr\'{e}chet compactness.
\end{theorem}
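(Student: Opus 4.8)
The plan is to promote the $\mathcal{I}_{ev}$-limit point supplied by $\mathcal{I}$-Fr\'{e}chet compactness to an $\mathcal{I}^{\mathcal{K}}_{ev}$-limit point by thinning its witnessing function along the shrinking condition; the same point $\alpha$ will serve, only the witness changes. Fix an infinite $Y\subset X$. Since $X$ is $\mathcal{I}$-Fr\'{e}chet compact, $Y$ has an $\mathcal{I}_{ev}$-limit point $\alpha$, witnessed by an $\mathcal{I}$-nonthin, non $\mathcal{I}$-eventually constant function $f:A\to Y\setminus\{\alpha\}$ that $\mathcal{I}/_A$-converges to $\alpha$. Using first countability, fix a decreasing local base $\{U_i\}_{i\in\mathbb{N}}$ at $\alpha$ and set $A_i=\{s\in A:f(s)\in U_i\}$. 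Convergence gives $A\setminus A_i\in\mathcal{I}/_A$, so $A_i\in\mathcal{F}(\mathcal{I}/_A)$; and since $A\notin\mathcal{I}$ while $A\setminus A_i\in\mathcal{I}/_A\subset\mathcal{I}$, each $A_i\notin\mathcal{I}$.

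Next I would apply the shrinking condition A$(\mathcal{I},\mathcal{K})$ to the sequence $\{A_i\}$, whose terms all lie outside $\mathcal{I}$, to obtain sets $B_i\in\mathcal{K}$ with $B_i\subset A_i$ and $B:=\bigcup_{i=1}^{\infty}B_i\notin\mathcal{I}$. Put $h=f|_B:B\to Y\setminus\{\alpha\}$. Then $h$ is $\mathcal{I}$-nonthin since $B\notin\mathcal{I}$, it takes its values in $Y\setminus\{\alpha\}$, and, omitting $\alpha$, it is not $\mathcal{I}$-eventually constant at $\alpha$; thus $h$ has exactly the shape required to witness an $\mathcal{I}^{\mathcal{K}}_{ev}$-limit point.

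The heart of the argument is to verify that $h$ is $\mathcal{K}$-convergent to $\alpha$; granting this and taking $M=B\in\mathcal{F}(\mathcal{I}/_B)$ in the definition makes $h$ an $(\mathcal{I}/_B)^{\mathcal{K}}$-convergent function to $\alpha$, so that $\alpha$ becomes an $\mathcal{I}^{\mathcal{K}}_{ev}$-limit point of $Y$. To see the $\mathcal{K}$-convergence, fix $i$ and note $\{s\in B:h(s)\notin U_i\}=B\setminus A_i=\bigcup_{j}(B_j\setminus A_i)$. Because $\{U_i\}$ is decreasing, $j\ge i$ forces $U_j\subset U_i$, hence $B_j\subset A_j\subset A_i$ and $B_j\setminus A_i=\emptyset$; only the finitely many indices $j<i$ contribute, so $B\setminus A_i\subset\bigcup_{j<i}B_j\in\mathcal{K}$. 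As $\{U_i\}$ is a local base, this yields $\{s\in B:h(s)\notin U\}\in\mathcal{K}$ for every open $U\ni\alpha$, which is precisely $\mathcal{K}$-convergence of $h$ to $\alpha$.

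I expect the decisive difficulty to be exactly this last computation: the shrinking condition only delivers $\mathcal{K}$-small pieces $B_i$ inside the $A_i$ with non-$\mathcal{I}$ union, and it is their interaction with the nesting $A_1\supset A_2\supset\cdots$ coming from the decreasing base that collapses $B\setminus A_i$ to a finite, hence $\mathcal{K}$-admissible, union. The remaining requirements---$\mathcal{I}$-nonthinness of $h$, its value restriction to $Y\setminus\{\alpha\}$, and non $\mathcal{I}$-eventual constancy at $\alpha$---are immediate from the construction, so that no separation hypothesis beyond first countability enters, and the conclusion that $X$ is $\mathcal{I}^{\mathcal{K}}$-Fr\'{e}chet compact follows since $Y$ was an arbitrary infinite subset.
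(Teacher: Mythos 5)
Your proposal is correct and follows essentially the same route as the paper's own proof: extract the $\mathcal{I}_{ev}$-limit point $\alpha$ and its witness $f$, form the sets $A_i$ from a decreasing countable base at $\alpha$, apply the shrinking condition A$(\mathcal{I},\mathcal{K})$ to get $B_i\in\mathcal{K}$ with non-$\mathcal{I}$ union, and show the restriction of $f$ to that union is $\mathcal{K}$-convergent to $\alpha$ because only finitely many $B_j$ can meet the complement of each $A_i$. Your write-up is in fact slightly more explicit than the paper's (verifying $A_i\notin\mathcal{I}$ and spelling out the passage from basic neighborhoods to arbitrary ones), but the decomposition and the key finite-union argument are identical.
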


\begin{proof}
	Suppose $X$ is first countable $\mathcal{I}$- Fr\'{e}chet compact space. Consider an infinite subset $A\subset X$ which has an $\mathcal{I}_{ev}$-limit  point say $\alpha \in X$. There is an $\mathcal{I}$-nonthin function $f: B\to A\setminus \{\alpha\}$ that $\mathcal{I}/_B$-converges to $\alpha$. Let $\{U_n\}$ be countable base for $X$ at the point $\alpha$ such that $U_{n+1}\subset U_n$, for all $n\in \mathbb{N}$. Therefore for all $m\in \mathbb{N}$, 
	$A_m=\{s\in B : f/_B(s)\in U_m\}\notin \mathcal{I}$. If shrinking condition $A(\mathcal{I},\mathcal{K})$ holds, there exists a sequence of sets $\{B_i\}$ in $\mathcal{K}$ such that $B_i\subset A_i$ and $D=\underset{i\in \mathbb{N}}\cup B_i\notin\mathcal{I}$. Let $U$ be any open set containing $\alpha$, then there exists $U_p\in\{U_n\}$ such that $U_n\subset U$, for all $n\geq p$. So $\{s\in D : f(s)\notin U\}\subset B_1\cup B_2\cup ...\cup B_p\in \mathcal{K}$. Therefore the restriction $f: D\to X\setminus \{\alpha\}$ is $\mathcal{K}$-convergent to $\alpha$. 
\end{proof}

\begin{example}
	Consider the space $X=\mathbb{N}\times \{0,1\}$, where discrete topology on $\mathbb{N}$ and indiscrete topology on $\{0,1\}$. Clearly $X$ is Fr\'{e}chet compact. Now if we take an infinite set $A=\{(1,0), (2,0), (3,0),....\}$, then it has no $\mathcal{I}_{ev}$-limit point.
\end{example}

\begin{theorem}
	For any ideal $\mathcal{I}$ on $\omega$,
	first countable Fr\'{e}chet compact $T_1$ space is  $\mathcal{I}$-Fr\'{e}chet compact.
\end{theorem}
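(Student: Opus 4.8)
The plan is to take an arbitrary infinite subset $Y\subset X$ and exhibit an $\mathcal{I}_{ev}$-limit point of $Y$ directly, by constructing an honest sequence into $Y\setminus\{\alpha\}$ that converges to a suitable $\alpha$ in the ordinary sense; admissibility of $\mathcal{I}$ will then upgrade ordinary convergence to $\mathcal{I}/_\omega$-convergence for free. First I would invoke Fréchet compactness: since $Y$ is infinite, it has an ordinary limit point $\alpha\in X$. The $T_1$ axiom enters immediately afterwards, via the standard fact that in a $T_1$ space every neighbourhood of a limit point of $Y$ meets $Y$ in infinitely many points; thus each neighbourhood of $\alpha$ contains infinitely many points of $Y$ distinct from $\alpha$.

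Using first countability, I would fix a decreasing countable local base $\{U_n\}$ at $\alpha$ with $U_{n+1}\subset U_n$, and then choose points inductively: pick $y_n\in (Y\cap U_n)\setminus\{\alpha,y_1,\dots,y_{n-1}\}$, which is possible precisely because $Y\cap U_n$ is infinite. This produces a sequence $(y_n)$ of pairwise distinct points of $Y\setminus\{\alpha\}$ converging to $\alpha$: given any open $U\ni\alpha$ there is $p$ with $U_p\subset U$, whence $y_n\in U_n\subset U_p\subset U$ for all $n\geq p$. I then set $A=\omega$ and define $f:\omega\to Y\setminus\{\alpha\}$ by $f(n)=y_n$.

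It remains to check the three requirements for $\alpha$ to be an $\mathcal{I}_{ev}$-limit point. The domain $\omega\notin\mathcal{I}$ since $\mathcal{I}$ is nontrivial, so $f$ is $\mathcal{I}$-nonthin. Because the $y_n$ are pairwise distinct, for every $\beta\in X$ the set $\{n:f(n)=\beta\}$ has at most one element, so $\{n:f(n)\neq\beta\}$ is cofinite and hence not in $\mathcal{I}$ (as $\mathrm{Fin}\subset\mathcal{I}$ and $\omega\notin\mathcal{I}$); thus $f$ is not $\mathcal{I}$-eventually constant at any point. Finally, for each open $U\ni\alpha$ the set $\{n:f(n)\notin U\}$ is finite, hence lies in $\mathcal{I}$ by admissibility, so $f$ is $\mathcal{I}/_\omega$-convergent to $\alpha$. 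Therefore $\alpha$ is an $\mathcal{I}_{ev}$-limit point of $Y$, and as $Y$ was an arbitrary infinite subset, $X$ is $\mathcal{I}$-Fréchet compact.

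The step I expect to be the crux is the implication from ``$\alpha$ is a limit point of $Y$'' to ``every neighbourhood of $\alpha$ contains infinitely many points of $Y$'', which is exactly where $T_1$ is indispensable: without separation one could be forced to approach $\alpha$ only through repetitions of finitely many points, violating the non-$\mathcal{I}$-eventually-constant clause. The rest is bookkeeping with the ideal, and in fact the argument lives entirely at the level of $\mathrm{Fin}$, the ideal $\mathcal{I}$ being used only through the inclusions $\mathrm{Fin}\subset\mathcal{I}$ and $\omega\notin\mathcal{I}$.
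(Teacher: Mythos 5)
Your proof is correct and follows essentially the same route as the paper's: Fr\'{e}chet compactness yields a limit point $\alpha$ of the infinite set, a decreasing countable local base at $\alpha$ is used to build a sequence in $Y\setminus\{\alpha\}$ converging to $\alpha$, and the inclusion $\mathrm{Fin}\subset\mathcal{I}$ upgrades ordinary convergence to $\mathcal{I}$-convergence. The only difference is one of care: you make explicit the $T_1$ step (every neighbourhood of $\alpha$ meets $Y$ in infinitely many points, permitting pairwise distinct choices and hence non-$\mathcal{I}$-eventual constancy), a point the paper's proof asserts without justification.
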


\begin{proof}
Let A be any infinite subset of a first countable limit point compact $T_1$ space. Then A has a limit point say $\alpha$. Let $\{V_n\}$ be a countable base at $\alpha$ such that $V_{n+1}\subset V_n$, for all n. Therefore there exists a sequence $(x_n)$ such that $x_n\in A\cap (V_n\setminus \{\alpha\})$ and $(x_n)$ converges to $\alpha$. This implies that $(x_n)$ is non $\mathcal{I}$-eventually constant and $\mathcal{I}$-converges to $\alpha$. So $\alpha$ is an $\mathcal{I}_{ev}$-limit point of $A$.
\end{proof}
\begin{corollary}\label{equiv}
	For any first countable $T_1$ space, if $\mathcal{I}$ be ideal on $\omega$, Fr\'{e}chet compactness and $\mathcal{I}$-Fr\'{e}chet compactness are equivalent.
\end{corollary}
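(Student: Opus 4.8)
The plan is to prove the two implications separately and combine them. The forward direction, that Fr\'{e}chet compactness implies $\mathcal{I}$-Fr\'{e}chet compactness, is already available: it is exactly the statement of the theorem immediately preceding this corollary, valid in any first countable $T_1$ space for every ideal $\mathcal{I}$ on $\omega$. So nothing new is needed there, and I would simply invoke it.

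For the reverse direction, $\mathcal{I}$-Fr\'{e}chet compactness implies Fr\'{e}chet compactness, the strategy is to show that every $\mathcal{I}_{ev}$-limit point is in fact an ordinary limit point; the conclusion is then immediate. Let $Y$ be an infinite subset of $X$ and let $\alpha$ be an $\mathcal{I}_{ev}$-limit point of $Y$, witnessed by an $\mathcal{I}$-nonthin function $f: A\to Y\setminus\{\alpha\}$ that $\mathcal{I}/_A$-converges to $\alpha$. Fix an arbitrary open set $U$ containing $\alpha$. By $\mathcal{I}/_A$-convergence the set $\{s\in A : f(s)\notin U\}$ lies in $\mathcal{I}/_A$.

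The one point requiring care---and the only real obstacle---is to argue that $\{s\in A : f(s)\in U\}$ is nonempty. Here I would use that $f$ is $\mathcal{I}$-nonthin, i.e.\ $A\notin\mathcal{I}$: since $\mathcal{I}$ is closed under subsets, $A\notin\mathcal{I}$ forces $A\notin\mathcal{I}/_A$, so the set $\{s\in A : f(s)\notin U\}\in\mathcal{I}/_A$ cannot exhaust all of $A$. Choosing $p$ in the nonempty complement gives $f(p)\in U\cap(Y\setminus\{\alpha\})$, so $U$ meets $Y\setminus\{\alpha\}$. As $U$ was arbitrary, $\alpha$ is a limit point of $Y$ in the ordinary topological sense.

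Combining the two steps, every infinite subset of an $\mathcal{I}$-Fr\'{e}chet compact space has an $\mathcal{I}_{ev}$-limit point, hence a genuine limit point, so the space is Fr\'{e}chet compact; together with the preceding theorem this yields the asserted equivalence. I expect this proof to be short and essentially self-contained once the nontriviality of the trace ideal $\mathcal{I}/_A$ is noted; in particular, no appeal to first countability or to the $T_1$ axiom is needed for the reverse implication, those hypotheses being consumed only by the forward direction through the preceding theorem.
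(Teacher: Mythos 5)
Your proposal is correct and matches the paper's intent: the corollary is stated as an immediate consequence of the preceding theorem, which supplies the forward implication, while the converse is left implicit. Your filling-in of that converse --- using $A\notin\mathcal{I}$ to see that the trace ideal $\mathcal{I}/_A$ is proper, so every neighborhood of an $\mathcal{I}_{ev}$-limit point meets $Y\setminus\{\alpha\}$, making it an ordinary limit point without any appeal to first countability or $T_1$ --- is exactly the routine argument the authors omit.
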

\begin{theorem}
	 $\mathcal{I}^\mathcal{K}$-closed $(\mathcal{I}$-closed$)$ subset of  $\mathcal{I}^\mathcal{K}$-Fr\'{e}chet compact space $(\mathcal{I}$-Fr\'{e}chet compact space$)$ is  $\mathcal{I}^\mathcal{K}$-Fr\'{e}chet compact $($resp. $\mathcal{I}$-Fr\'{e}chet compact $)$.
\end{theorem}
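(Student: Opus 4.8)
The plan is to prove the $\mathcal{I}^\mathcal{K}$-version directly and to note that the $\mathcal{I}$-version is verbatim the same argument, with $\mathcal{I}/_A$-convergence replacing $(\mathcal{I}/_A)^\mathcal{K}$-convergence and $\overline{\,\cdot\,}^{\mathcal{I}}$ replacing $\overline{\,\cdot\,}^{\mathcal{I}^\mathcal{K}}$. So let $X$ be $\mathcal{I}^\mathcal{K}$-Fr\'echet compact, let $Y\subset X$ be $\mathcal{I}^\mathcal{K}$-closed (so that $\overline{Y}^{\mathcal{I}^\mathcal{K}}=Y$), and fix an arbitrary infinite subset $B\subset Y$. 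Since $B$ is also an infinite subset of $X$, the $\mathcal{I}^\mathcal{K}$-Fr\'echet compactness of $X$ hands me an $\mathcal{I}^\mathcal{K}_{ev}$-limit point $\alpha\in X$ of $B$. I would then have to establish two things: that $\alpha$ actually lies in $Y$, and that $\alpha$ is still an $\mathcal{I}^\mathcal{K}_{ev}$-limit point of $B$ once $Y$ is given the subspace topology.

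For the first, I would unwind the definition of $\mathcal{I}^\mathcal{K}_{ev}$-limit point to obtain an $\mathcal{I}$-nonthin, non $\mathcal{I}$-eventually constant $f:A\to B\setminus\{\alpha\}$ that $(\mathcal{I}/_A)^\mathcal{K}$-converges to $\alpha$. Because $B\setminus\{\alpha\}\subset Y$, this very $f$ is an admissible witness in the definition of $\overline{Y}^{\mathcal{I}^\mathcal{K}}$, whence $\alpha\in\overline{Y}^{\mathcal{I}^\mathcal{K}}=Y$. This is the only step that genuinely uses the $\mathcal{I}^\mathcal{K}$-closedness of $Y$.

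For the second, I would observe that \emph{$\mathcal{I}$-nonthin} (that is, $A\notin\mathcal{I}$) and \emph{non $\mathcal{I}$-eventually constant} depend only on $A$ and on $f$ as a map, so they are unaffected by the change of codomain. The point needing care is that $(\mathcal{I}/_A)^\mathcal{K}$-convergence is preserved under passage to the subspace. Here I would use that the auxiliary map $g$ (equal to $f$ on some $M\in\mathcal{F}(\mathcal{I}/_A)$ and constantly $\alpha$ off $M$) takes all its values in $Y$, since $f[A]\subset Y$ and $\alpha\in Y$. Every neighbourhood of $\alpha$ in $Y$ has the form $U\cap Y$ with $U$ open in $X$, and $g(s)\in Y$ for every $s$ forces $\{s\in A:g(s)\in U\cap Y\}=\{s\in A:g(s)\in U\}$; hence the $\mathcal{K}$-membership conditions defining convergence coincide in $X$ and in $Y$. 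Thus $g$ $\mathcal{K}$-converges to $\alpha$ in $Y$, so $f$ $(\mathcal{I}/_A)^\mathcal{K}$-converges to $\alpha$ in $Y$, and $\alpha$ is an $\mathcal{I}^\mathcal{K}_{ev}$-limit point of $B$ in $Y$, completing the proof.

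I do not expect a serious obstacle: the whole argument is bookkeeping about the interaction of the defining data with the subspace topology, and the only substantive input is the first step. As a cross-check I would keep in mind the alternative route through Theorem~\ref{charac}: were $Y$ not $\mathcal{I}^\mathcal{K}$-Fr\'echet compact it would contain an infinite discrete subspace $D$ that is $\mathcal{I}^\mathcal{K}$-closed in $Y$, and the same two observations (the $\mathcal{I}^\mathcal{K}$-closure of $D$ computed in $X$ stays inside $Y$ by $\mathcal{I}^\mathcal{K}$-closedness of $Y$, while discreteness and convergence are intrinsic to $D\subset Y\subset X$) would show $D$ to be an infinite, $\mathcal{I}^\mathcal{K}$-closed, discrete subspace of $X$, contradicting Theorem~\ref{charac} applied to $X$.
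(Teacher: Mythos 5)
Your proof is correct and follows essentially the same route as the paper's: use the $\mathcal{I}^\mathcal{K}$-Fr\'echet compactness of $X$ to get an $\mathcal{I}^\mathcal{K}_{ev}$-limit point $\alpha$ of the infinite set, then note that the witness function takes values in $Y$, so $\alpha\in\overline{Y}^{\mathcal{I}^\mathcal{K}}=Y$ by $\mathcal{I}^\mathcal{K}$-closedness. The only difference is that you explicitly check that the defining convergence survives passage to the subspace topology on $Y$ (via the observation that the auxiliary map $g$ takes all its values in $Y$) --- a point the paper's proof silently skips --- so your write-up is, if anything, more complete than the original.
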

\begin{proof}
Consider an $\mathcal{I}^\mathcal{K}$-closed subset Y of  $\mathcal{I}^\mathcal{K}$-Fr\'{e}chet compact space X. Let $A\subset Y$ be infinite. Then $A$ has an  $\mathcal{I}^\mathcal{K}_{ev}$-limit point say, $\alpha$. There is an $\mathcal{I}$-nonthin function $f: B\to A\setminus\{\alpha\}$ that $\mathcal{K}$-converges to $\alpha$. Since $A\subset Y$, $\alpha\in \overline{Y}^{\mathcal{I}^\mathcal{K}}=Y$. Therefore $A$ has an $\mathcal{I}^\mathcal{K}_{ev}$-limit point in $Y$ and so $Y$ is $\mathcal{I}^\mathcal{K}$-Fr\'{e}chet compact.
\end{proof}

\begin{corollary}
	Closed subspace of  $\mathcal{I}^\mathcal{K}$-Fr\'{e}chet $(\mathcal{I}$-Fr\'{e}chet compact space$)$ compact space is  $\mathcal{I}^\mathcal{K}$-Fr\'{e}chet compact  $($resp. $\mathcal{I}$-Fr\'{e}chet compact $)$.
\end{corollary}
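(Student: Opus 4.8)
The plan is to deduce this corollary directly from the immediately preceding theorem together with Theorem \ref{topolI}. The bridge I would use is the observation (recorded in the remark following Theorem \ref{topolI}) that, under the standing hypothesis $\mathcal{K}\subset\mathcal{I}$, every topologically closed subset is already $\mathcal{I}^\mathcal{K}$-closed. Once that is established, the corollary is nothing more than an instance of the theorem we have just proved, with the stronger hypothesis ``$\mathcal{I}^\mathcal{K}$-closed'' replaced by the weaker, more familiar ``closed.''

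First I would take a topologically closed subspace $Y$ of an $\mathcal{I}^\mathcal{K}$-Fr\'{e}chet compact space $X$, so that $\bar{Y}=Y$. Invoking the chain $Y\subset\overline{Y}^{\mathcal{K}}\subset\overline{Y}^{\mathcal{I}^\mathcal{K}}\subset\bar{Y}$ from Theorem \ref{topolI} and using $\bar{Y}=Y$, I would read off $\overline{Y}^{\mathcal{I}^\mathcal{K}}=Y$; that is, $Y$ is $\mathcal{I}^\mathcal{K}$-closed. Applying the preceding theorem to the $\mathcal{I}^\mathcal{K}$-closed subset $Y$ of the $\mathcal{I}^\mathcal{K}$-Fr\'{e}chet compact space $X$ then yields that $Y$ is $\mathcal{I}^\mathcal{K}$-Fr\'{e}chet compact. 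The parenthetical $\mathcal{I}$-Fr\'{e}chet case is handled identically, replacing the $\mathcal{I}^\mathcal{K}$-closure by the $\mathcal{I}$-closure throughout and noting that Theorem \ref{topolI} delivers $\overline{Y}^{\mathcal{I}}\subset\bar{Y}$ by the same reasoning, so that closed subsets are $\mathcal{I}$-closed and the companion theorem applies.

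The only delicate point, and the step I would flag, is that the passage from ``closed'' to ``$\mathcal{I}^\mathcal{K}$-closed'' is not unconditional: the inclusion $\overline{Y}^{\mathcal{I}^\mathcal{K}}\subset\bar{Y}$ in Theorem \ref{topolI} is proved under the assumption $\mathcal{K}\subset\mathcal{I}$, which enters precisely through the inference that $\{s\in A: g(s)\in U\}\in\mathcal{F}(\mathcal{K}/_A)$ forces $\{s\in A: g(s)\in U\}\in\mathcal{F}(\mathcal{I}/_A)$. I would therefore make explicit that the corollary is being asserted in the regime $\mathcal{K}\subset\mathcal{I}$, consistent with the other results of this section. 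Outside that regime an $\mathcal{I}^\mathcal{K}$-closure can in principle exceed the topological closure, the implication ``closed $\Rightarrow$ $\mathcal{I}^\mathcal{K}$-closed'' can fail, and the reduction to the previous theorem would break down; so the hypothesis $\mathcal{K}\subset\mathcal{I}$ is exactly what makes the argument go through.
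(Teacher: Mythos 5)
Your proof is correct and follows essentially the same route the paper intends for this corollary: the remark after Theorem \ref{topolI} (topologically closed sets are $\mathcal{I}^\mathcal{K}$-closed when $\mathcal{K}\subset\mathcal{I}$) combined with the immediately preceding theorem on $\mathcal{I}^\mathcal{K}$-closed (resp. $\mathcal{I}$-closed) subsets. Your observation that the hypothesis $\mathcal{K}\subset\mathcal{I}$ is genuinely needed, though left implicit in the corollary's statement, is accurate and consistent with the paper's own remark.
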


The following theorem possessed by finite derived set property or FDS-property, which was introduced in \cite{Wilson} to study some Fr\'{e}chet of the lattice of $T_1$-topologies on a set. 

\begin{definition}\cite{Wilson}
	A topological space $X$ has the finite derived set property or FDS-property if every infinite subset of X contains an infinite subset with only finitely many limit points in X. 
\end{definition}

\begin{theorem}
	For any topological space with the FDS-property, Fr\'{e}chet compactness implies $\mathcal{I}$-Fr\'{e}chet compactness. 
\end{theorem}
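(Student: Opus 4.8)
The plan is to reduce the statement to a single topological lemma and then feed it the FDS-property. Recall that Fr\'{e}chet compactness here means every infinite subset has a topological limit point, and that for an admissible ideal $\mathcal{I}$ ordinary convergence implies $\mathcal{I}$-convergence (as already used in the proof of Theorem \ref{topolI}). So it suffices to produce, for an arbitrary infinite $Y\subseteq X$, a point $\alpha$ together with a sequence of distinct points of $Y\setminus\{\alpha\}$ converging to $\alpha$ in the usual sense: indexing this sequence by $\mathbb{N}$ (which is $\mathcal{I}$-nonthin since $\mathbb{N}\notin\mathcal{I}$) yields a non $\mathcal{I}$-eventually constant function $f:\mathbb{N}\to Y\setminus\{\alpha\}$ that $\mathcal{I}$-converges to $\alpha$, i.e.\ an $\mathcal{I}_{ev}$-limit point of $Y$. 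This mirrors exactly the construction in the preceding theorem, with the FDS-property standing in for the first countable $T_1$ hypothesis.

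The heart of the argument is the following claim: in a Fr\'{e}chet compact space, every infinite set $E$ possessing only finitely many limit points contains a sequence of distinct points converging to one of those limit points. Granting this, the main proof is immediate. By the FDS-property the infinite set $Y$ contains an infinite $Z$ with only finitely many limit points in $X$; Fr\'{e}chet compactness guarantees $Z$ has at least one, so the claim applies and delivers the desired convergent sequence of distinct points inside $Z\subseteq Y$ (discarding at most one term so that the range avoids $\alpha$).

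To prove the claim I would induct on the number $j\ge 1$ of limit points of $E$ (nonzero by Fr\'{e}chet compactness). The recurring tool, replacing the separation axioms unavailable here, is the observation that if $U$ is open, $\alpha\in U$ is a limit point of $E$, and $E\setminus U$ were infinite, then $E\setminus U$ would have a limit point by Fr\'{e}chet compactness; that limit point is a limit point of $E$, hence one of the finitely many candidates, yet it lies in the closed set $X\setminus U$ and so cannot equal $\alpha$. When $j=1$ this forces $E\setminus U$ to be finite for every neighbourhood $U$ of the unique limit point $\alpha$, whence any enumeration of distinct points of $E\setminus\{\alpha\}$ converges to $\alpha$. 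When $j\ge 2$, fix a limit point $\alpha_1$ and split into two cases: either some infinite $E'\subseteq E$ fails to have $\alpha_1$ as a limit point, in which case the limit points of $E'$ form a strictly smaller (still nonempty) subset of $\{\alpha_1,\dots,\alpha_j\}$ and induction applies; or every infinite subset of $E$ retains $\alpha_1$ as a limit point, in which case the closed-complement observation again forces $E\setminus U$ finite for every neighbourhood $U$ of $\alpha_1$, giving convergence to $\alpha_1$ directly.

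The main obstacle is precisely this case analysis in the inductive step: without $T_1$ or Hausdorff separation one cannot isolate a single limit point by an open set, so the naive strategy of separating the unwanted accumulation points and deleting neighbourhoods of them is unavailable. The dichotomy above is what circumvents this --- it never attempts to separate limit points, but instead either shrinks their number by passing to a subset or shows the chosen limit point is unavoidable, which is exactly the convergence we seek. The delicate bookkeeping is to verify that the two cases are genuinely exhaustive and that the count of limit points strictly drops, using that a limit point of a subset is a limit point of the whole set; once that is in place, the passage from ordinary convergence to $\mathcal{I}$-convergence via admissibility is routine.
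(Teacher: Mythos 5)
Your proof is correct, and it takes a genuinely different route from the paper's. The paper never passes through ordinary convergence: it fixes an injective $\mathcal{I}$-nonthin function $f\colon M_0\to A$ into the FDS-witness $A$ (whose limit points are $\alpha_1,\dots,\alpha_n$) and runs a linear elimination directly at the level of $\mathcal{I}$-convergence --- at stage $k$, either $f$ restricted to the current domain $M_{k-1}$ already $\mathcal{I}/_{M_{k-1}}$-converges to $\alpha_k$, in which case $\alpha_k$ is the desired $\mathcal{I}_{ev}$-limit point, or one shrinks to $M_k=\{s\in M_{k-1}:f(s)\notin U_k\}\notin\mathcal{I}$ for some neighbourhood $U_k$ of $\alpha_k$; if all $n$ stages fail, the image of $M_n$ is an infinite subset of $A$ avoiding a neighbourhood of each $\alpha_i$, hence has no limit point at all, contradicting Fr\'echet compactness (the paper writes this final set as $A\setminus\bigcup_{i=1}^{n}A_i$, which is a slip --- the set that does the job is $A_n$). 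You instead factor everything through an ideal-free topological lemma --- in a Fr\'echet compact space, an infinite set with finitely many limit points contains a sequence of \emph{distinct} points converging in the ordinary sense to one of them --- proved by induction on the number of limit points via the dichotomy ``some infinite subset drops $\alpha_1$ as a limit point / every infinite subset keeps it,'' and only then transfer to the ideal using admissibility, as in the proof of Theorem \ref{topolI}. The engine is the same in both arguments (a limit point of $E\setminus U$ lies in the closed set $X\setminus U$, so neighbourhoods of all finitely many candidates cannot all be excised from an infinite set), but your decomposition yields a strictly stronger intermediate conclusion: the witnessing sequence is Fin-convergent, so one and the same sequence establishes $\mathcal{I}$-Fr\'echet compactness for every admissible ideal simultaneously, and in fact $\mathcal{I}^\mathcal{K}$-Fr\'echet compactness for any admissible $\mathcal{K}$ as well, since Fin-convergence implies $\mathcal{K}$-convergence; the paper's dichotomy, being phrased directly in terms of $\mathcal{I}$-convergence, produces only the $\mathcal{I}$-statement. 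The price, which the paper pays implicitly too, is an assumption on the index set: you need a copy of $\mathbb{N}$ in $S$ lying outside $\mathcal{I}$ (plus admissibility), while the paper needs some $\mathcal{I}$-nonthin $M_0$ admitting an injection into $A$, together with admissibility to keep its restrictions infinite and non-$\mathcal{I}$-eventually constant; neither hypothesis is stated in the theorem, so this is not a gap peculiar to your argument.
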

\begin{proof}
Suppose $(X,\tau)$ be a Fr\'{e}chet compact space with the FDS-property and $Y$ be an infinite subset of $X$. Since $X$ has the FDS-property, there exists an infinite subset $A$ of $Y$ with finite set of limit points say $\{\alpha_1, \alpha_2,...,\alpha_n\}$. Let $f: M_0\to A$ be an $\mathcal{I}$-nonthin function of distinct elements. If  $f: M_0\to A$ is $\mathcal{I}/_{M_0}$-convergent to $\alpha_1$, then the proof is done. If not then there exists an open set $U_1$ containing $\alpha_1$ such that $M_1=\{s\in M_0 : f(s)\notin U_1\}\notin \mathcal{I}$. Let $A_1=\{f(s); s\in M_1\}$. Then $A_1$ is an infinite set and  $f: M_1\to A_1$ be an $\mathcal{I}$-nonthin function of distinct elements.  If  $f: M_1\to A_1$ is $\mathcal{I}/_{M_1}$-convergent to $\alpha_2$, then the proof is done. If not then there exists an open set $U_2$ containing $\alpha_2$ such that $M_2=\{s\in M_1 : f(s)\notin U_2\}\notin \mathcal{I}$. Therefore $\{f(s); s\in M_2\}$ is an infinite set say $A_2$. Proceeding in this way we get for some $k\leq n$, the $\mathcal{I}$-nonthin function $f: M_{k-1} \to A_{k-1}$ is $\mathcal{I}/_{M_{k-1}}$-converges to $\alpha_k$. Otherwise the infinite set $A\setminus{\overset{n}{\underset{i=1}\cup}} A_i$ has no limit point, which contradicts our assumption. Hence every infinite subset of $X$ has an $\mathcal{I}_{ev}$-limit point.
\end{proof}

As in Corollary \ref{equiv} Fr\'{e}chet compactness and $\mathcal{I}$-Fr\'{e}chet compactness are equivalent in first countable $T_1$ topological space, where $\mathcal{I}$ is an ideal on $\omega$. In the following theorem it is established on the large class of $\mathcal{I}$-sequential space.

\begin{definition}
	A topological space is called $\mathcal{I}$-sequential if every $\mathcal{I}$-closed set is closed.
\end{definition}

\begin{theorem}
	Every $\mathcal{I}$-sequential Hausdorff Fr\'{e}chet compact space is $\mathcal{I}$-Fr\'{e}chet compact.
\end{theorem}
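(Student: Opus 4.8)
The plan is to argue by contraposition, leaning on the characterization stated just above (the $\mathcal{I}$-analogue of Theorem \ref{charac}): a space fails to be $\mathcal{I}$-Fr\'{e}chet compact precisely when it contains an infinite $\mathcal{I}$-closed discrete subspace. Thus it suffices to rule out such a subspace in an $\mathcal{I}$-sequential Hausdorff Fr\'{e}chet compact space. The whole point of the $\mathcal{I}$-sequentiality hypothesis is to convert the abstract notion of $\mathcal{I}$-closedness into ordinary topological closedness, after which the classical Bolzano--Weierstrass-type behaviour furnished by Fr\'{e}chet (limit point) compactness can be brought to bear.

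First I would suppose, for contradiction, that $X$ is not $\mathcal{I}$-Fr\'{e}chet compact, and invoke the characterization to obtain an infinite $\mathcal{I}$-closed discrete subspace $D\subset X$. Next I would apply the definition of $\mathcal{I}$-sequentiality: since $D$ is $\mathcal{I}$-closed, i.e. $\overline{D}^{\mathcal{I}}=D$, every $\mathcal{I}$-closed set being closed forces $D$ to be closed in $X$. Now $D$ is an infinite subset of the Fr\'{e}chet compact space $X$, so it possesses a limit point $\alpha\in X$. Because $D$ is closed, $\alpha\in\overline{D}=D$. Finally, discreteness of the subspace $D$ yields an open set $U\ni\alpha$ with $U\cap D=\{\alpha\}$, so $U$ contains no point of $D$ distinct from $\alpha$; this contradicts $\alpha$ being a limit point of $D$. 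Hence no infinite $\mathcal{I}$-closed discrete subspace exists, and $X$ is $\mathcal{I}$-Fr\'{e}chet compact.

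The main obstacle is the clean handoff between the three hypotheses rather than any computation: $\mathcal{I}$-sequentiality is used only to upgrade $\mathcal{I}$-closedness to closedness, Fr\'{e}chet compactness supplies the accumulation point, and the Hausdorff assumption keeps the limit-point notion well-behaved (unique limits) so that the framework underlying the preceding results stays consistent. I expect the delicate step to be pinning down that the limit point $\alpha$ produced by Fr\'{e}chet compactness must simultaneously lie in $D$ (via closedness) and respect the relative-discreteness neighbourhood $U$; once $\alpha\in D$ is secured, the contradiction with $U\cap D=\{\alpha\}$ is immediate. Note that first countability is not required here, precisely because the characterization theorem already absorbs the hard convergence construction, reducing the whole argument to this closedness-versus-discreteness clash.
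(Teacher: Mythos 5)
Your proof is correct, but it follows a genuinely different route from the paper's. The paper does not use the discrete-subspace characterization at all: it takes an infinite set $A$ with no $\mathcal{I}_{ev}$-limit point, observes that $A$ is then $\mathcal{I}$-closed and hence closed by $\mathcal{I}$-sequentiality (the one step you share), and then argues that $A$, being a closed subspace of a countably compact Hausdorff space, is countably compact and Hausdorff, ``therefore first countable in the relative topology'', and finally applies the earlier theorem that first countable Fr\'{e}chet compact $T_1$ spaces are $\mathcal{I}$-Fr\'{e}chet compact. Your argument---extract an infinite $\mathcal{I}$-closed discrete subspace $D$ from the $\mathcal{I}$-analogue of Theorem~\ref{charac}, upgrade it to closed, take a limit point of $D$ guaranteed by Fr\'{e}chet compactness, push it into $D$ by closedness, and contradict relative discreteness---is more elementary and in fact more solid: the paper's assertion that a countably compact Hausdorff subspace must be first countable is unjustified (and false in general, e.g.\ for compact Hausdorff spaces), whereas each of your steps is airtight once the characterization is granted.

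That, however, is also the caveat: you place the entire weight on the forward implication of the characterization (not $\mathcal{I}$-Fr\'{e}chet compact $\Rightarrow$ there exists an infinite $\mathcal{I}$-closed discrete subspace). The paper states this as a biconditional, but its proof of Theorem~\ref{charac} (of which the $\mathcal{I}$-version is declared ``just a translation'') establishes only the reverse implication: both paragraphs of that proof begin from a given infinite closed discrete subspace. The forward implication is not automatic: a witness set with no $\mathcal{I}_{ev}$-limit point is $\mathcal{I}$-closed (given $T_1$) but need not be discrete; indeed, in the paper's own example $X=\mathbb{N}\times\{0,1\}$ (discrete times indiscrete), $X$ fails to be $\mathcal{I}$-Fr\'{e}chet compact while its $\mathcal{I}$-closed sets are exactly the sets $E\times\{0,1\}$, none of which is an infinite discrete subspace---so the implication you cite fails outright without separation axioms. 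In your setting Hausdorffness is precisely what rescues it: every infinite Hausdorff space contains an infinite relatively discrete subspace, and a relatively discrete subspace of the witness set is $\mathcal{I}$-closed because uniqueness of limits rules out the $\mathcal{I}$-eventually-constant alternative. So the Hausdorff hypothesis does concrete work in your proof---it is what makes the cited characterization true---rather than merely ``keeping the limit-point notion well-behaved'' as you suggest; to make your argument self-contained you would need to supply a proof of that forward implication under the Hausdorff assumption.
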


\begin{proof}
	Suppose $X$ is $\mathcal{I}$-sequential Hausdorff Fr\'{e}chet compact space. Let $A$ be an infinite subset of $X$ having no $\mathcal{I}_{ev}$-limit point. Thus $A$ is $\mathcal{I}$-closed and so closed. Since closed subset of a countable compact space is countable compact, $A$ is countable compact and Hausdorff. Therefore $A$ is first countable in the relative topology and so $\mathcal{I}$-Fr\'{e}chet compact, which contradicts the assumption.
\end{proof}

In \cite{Singha} $\mathcal{I}$-compactness was introduced and showed that even in metric spaces $\mathcal{I}$-compactness and compactness are different. A topological space $X$ is said to be $\mathcal{I}$-compact if any $\mathcal{I}$-nonthin function $f:A\to X$ has an $\mathcal{I}$-nonthin restriction $f/_B: B\to X$ that $\mathcal{I}/_B$-converges to some point in $X$. For any nontrivial ideal on $S$, every $\mathcal{I}$-compact space is $\mathcal{I}$-Fr\'{e}chet compact. As in Note 4 \cite{Singha}, if $\mathcal{I}_m$ is the dual maximal ideal to the free ultrafilter, then every Fr\'{e}chet compact space is $\mathcal{I}$-compact.
 Now in the following, figure 1 is described the relation among all such compactness.

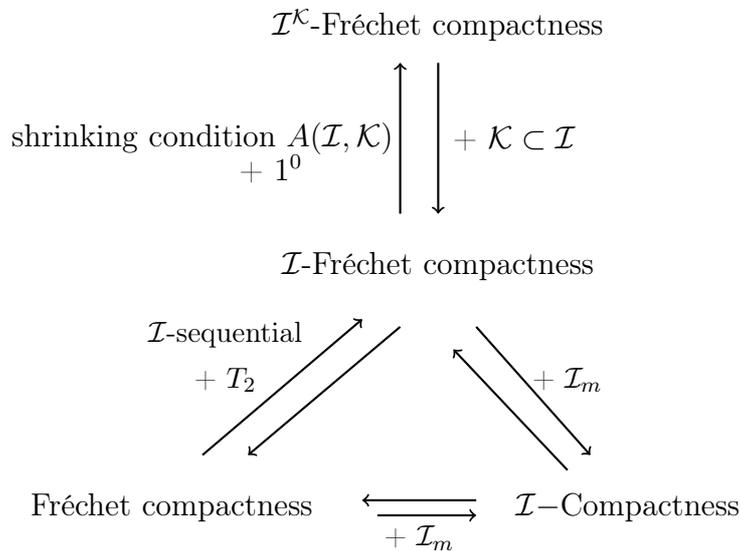
\begin{figure}[ht]
	\begin{center}
		\begin{tikzpicture}
			\node[] at (-2,0.1) {Fr\'{e}chet compactness};
			\draw[thick,->] (0.7,0) -- (2,0);
			\draw[thick,<-] (0.5,0.2) -- (2,0.2);
			\node[] at (1.25,-.3) {\small + $\mathcal{I}_m$};
			\node[] at (-1.3,2.4) {\small{$\mathcal{I}$-sequential} };
			\node[] at (-1.3,1.8) {\small{+ $T_2$} };
				\node[] at (3.2,1.8) {\small{+ $\mathcal{I}_m$} };
			\node[] at (4,0.1) {$\mathcal{I}-$Compactness};
			\draw[thick,->] (2,2.5) -- (3.5,0.8);
			\draw [thick,<-] (1.7,2.2) -- (3.2,0.6);
			\node[] at (2.5,5) { + $\mathcal{K}\subset\mathcal{I}$};
			\node[] at (-1.6,5) { shrinking condition $A(\mathcal{I},\mathcal{K})$};
			\node[] at (-.7,4.6) {+ $1^0$};
			\draw[thick,->] (1,2.5) -- (-1,0.8);
			\draw[thick,<-] (0.5,2.6) -- (-1.6,0.8);
			\node[] at (1.5,3.3) {$\mathcal{I}$-Fr\'{e}chet compactness};
			\draw[thick,<-] (1,6) -- (1,4);
			\draw[thick,->] (1.5,6) -- (1.5,4);
			\node[] at (1.5,6.5) {$\mathcal{I}^\mathcal{K}$-Fr\'{e}chet compactness};
			\end{tikzpicture}
	      	\caption{Relation among $\mathcal{I}^\mathcal{K}$-Fr\'{e}chet compactness, $\mathcal{I}$-Fr\'{e}chet compactness, $\mathcal{I}$-compactness and Fr\'{e}chet compactness}
	\label{fig}
\end{center}
\end{figure}

\end{document}